\DeclareMathOperator{\sgn}{sgn}
\newtheorem{thm}{Theorem}[section]
\newtheorem{prop}[thm]{Proposition}
\newtheorem{definition}[thm]{Definition}
\newtheorem{rem}[thm]{Remark}
\newtheorem{corol}[thm]{Corollary}
\newcommand{\KeyWords}[1]
{
  {\small	
  \textbf{Keywords:} #1}
}
\newcommand{\msc}[1]
{
  {\small	
  \textbf{2020 Mathematics Subject Classification:} #1}
}
\title{Differential inclusions and quasi-Lyapunov functions}
\author{Martin Ivanov \and Mikhail Krastanov \and Nadezhda Ribarska}
\date{}
\begin{document}

\maketitle

\begin{abstract}
A sufficient condition for existence of a solution of a differential inclusion with a uniformly bounded right-hand side that has nonempty closed (possibly nonconvex) values is obtained. A new way to deal with the so-called ``bad points'' is proposed. An Olech-type result is obtained as a corollary. An example, which originates from the Fuller problem from optimal control theory, is given to demonstrate the applicability of the main result.
\end{abstract} \vspace{5mm}

\KeyWords{differential inclusions with nonconvex right-hand side, existence of solutions, quasi-Lyapunov function, invariance, Fuller problem}\vspace{5mm}

\msc{34A36, 34A60}

\unmarkedfntext{
\hspace*{-2em}
Martin Ivanov\\
Faculty of Mathematics and Informatics, Sofia University “St. Kliment Ohridski”, 5 James Bourchier Blvd., 1164 Sofia, Bulgaria, e-mail: \href{mailto:mdobrinov@fmi.uni-sofia.bg}{mdobrinov@fmi.uni-sofia.bg}
\vspace{1.5mm}

\hspace*{-1em}Mikhail Krastanov\\
Faculty of Mathematics and Informatics, Sofia University “St. Kliment Ohridski”, 5 James Bourchier Blvd., 1164 Sofia, Bulgaria and Institute of Mathematics and Informatics, Bulgarian Academy of Sciences, Acad. G. Bonchev str., bl. 8, 1113 Sofia, Bulgaria, e-mail: \href{mailto:krastanov@fmi.uni-sofia.bg}{krastanov@fmi.uni-sofia.bg}
\vspace{1.5mm}

\hspace*{-1em}Nadezhda Ribarska\\
Faculty of Mathematics and Informatics, Sofia University “St. Kliment Ohridski”, 5 James Bourchier Blvd., 1164 Sofia, Bulgaria and Institute of Mathematics and Informatics, Bulgarian Academy of Sciences, Acad. G. Bonchev str., bl. 8, 1113 Sofia, Bulgaria, e-mail: \href{mailto:ribarska@fmi.uni-sofia.bg}{ribarska@fmi.uni-sofia.bg}\\}

\section{Introduction}

We study the existence of local solutions of differential inclusions of the form $$\dot{x} \in F(x),\, x(0)=x_0,$$ where $x_0 \in \mathbb{R}^n$ and $F : \mathbb{R}^n \rightrightarrows \mathbb{R}^n$ is a multi-valued mapping with nonempty values, i.e. the existence of $T > 0$ and an absolutely continuous function $x: [0,T] \to \mathbb{R}^n$ such that $x(0)=x_0$ and $\dot{x}(t) \in F(x(t))$ for almost every $t \in [0,T]$.

It is well known that there is a solution for the case of upper semi-continuous right-hand side with nonempty convex compact values. The question whether solutions exist for the case of continuous right-hand side with nonconvex
values was solved by Filippov in 1971 (cf. \cite{F}). This was generalized to
the case of lower semi-continuous right-hand side by Bressan (1980, cf. \cite{B}) and by Lojasiewicz (1980, cf. \cite{L0}) (more detailed historical remarks can be found in \cite{D}). A longstanding open problem is the existence of a solution of a differential inclusion with upper semi-continuous right-hand side with nonconvex values. As can be seen from the well-known Filippov's examples (cf. \cite{D}), such a solution does not always exist unless some additional assumptions are made.

Existence results for differential inclusions with mixed-type conditions on the right-hand side were proved by Olech (1975, cf. \cite{O}) and Lojasiewicz (1985, cf. \cite{L}) (see also Corollary 6.4 from \cite{D} and the remarks after it). These results unify the upper semi-continuous and lower semi-continuous cases. For example, in Lojasiewicz's result, lower semi-continuity on an open set and convexity and upper semi-continuity on its complement are assumed. In \cite{Bivas} Bivas generalized all known such theorems in the finite-dimensional case. In the present paper, we propose a theorem of this type, but instead of convexity, we assume that the field ``moves away'' from the set of points, where the right-hand side lacks lower semi-continuity.  

We make use of the formalism proposed in \cite{KR}, which in a way is a development of the patchy vector fields of Ancona and Bressan (cf. \cite{AB}). This approach consists in constructing a ``suitable'' relatively open partition of the domain of the multi-valued mapping and approximating it on each element of the partition with an upper semi-continuous mapping which has convex compact values. This approximation mapping should be ``not far'' from the original one, and with respect to it the element of the partition should be weakly forward invariant (see Definition \ref{invapr} below). This construction works well in the case where the relatively open partition is locally finite. In such a case, a uniform limit of $\varepsilon$-solutions is an $\varepsilon$-solution of the differential inclusion. For the case where locally finite partitions of the above kind cannot be constructed, a more complicated structure (consisting of countably many relatively open partitions, each refining the previous one, see Definition \ref{invpapr} below) is developed in \cite{KR}.

In the present paper, we use the techniques from \cite{KR} to prove a sufficient condition for existence of a solution of a differential inclusion with uniformly bounded right-hand side that has nonempty closed values. We propose a different way of dealing with the ``bad points''. In contrast to the assumption of convex values of the mapping on the set of ``bad points'' in \cite{KR}, we assume the existence of a function that keeps the $\varepsilon$-solutions of the inclusion starting from ``good points'' away from the set of ``bad points''. In this way, we prove that it is possible to construct an $\varepsilon$-solution starting from a point on the boundary of the set of ``bad points''.

The function in question (called quasi-Lyapunov function) is assumed to be piecewise smooth. The reason is the fact that this research began as a search for conditions guaranteeing the existence of solutions of inclusions whose right-hand side belongs to some o-minimal structure (for example, subanalytic sets). In such a situation, piecewise smoothness is a natural assumption. It is possible to look for non-smooth variants.

We show that the main result is applicable to an inclusion that originates from the optimal feedback for the Fuller problem from optimal control theory. This problem is one of the first examples of chattering control -- the optimal control switches infinitely many times on a finite time interval (cf. \cite{ZB}). We construct a quasi-Lyapunov function in this example which is semi-algebraic.

The structure of the paper is as follows. In the second section, the necessary preliminary definitions and results are stated. The third section contains the main result of the paper. A definition of a quasi-Lyapunov function is given. In the presence of such a function, we prove that there exists an $\varepsilon$-solution of the inclusion starting from a ``bad point''. Under an additional assumption that involves successive refinements of the partition, the limit as $\varepsilon \to 0$ is taken and a solution of the inclusion is obtained. As a consequence, we obtain a result of Olech-type. The fourth section contains an application of the main result. The fifth section contains some concluding remarks.

\section{Preliminaries}

The multi-valued mappings under consideration are defined on a nonempty subset $D$ of $\mathbb{R}^n$ and the solutions of the inclusions have to stay in $D$ in order to be well defined. Moreover, one of the key ideas of the presented approach is the weak forward invariance of the elements of the partitions with respect to the corresponding approximating mappings, so we recall the following notions related to invariance.

\begin{definition}[\cite{KR}]
    Let $D \subseteq \mathbb{R}^n$ and $\varphi:[0,T) \to \mathbb{R}^n$ be an absolutely continuous function. It is said that the set $D$ is forward invariant with respect to $\varphi$ if for each $t \in [0,T)$ with $\varphi(t) \in D$ there exists $\delta>0$ such that $\varphi(\tau) \in D$ for all $\tau \in [t,t+\delta)$.
\end{definition}

\begin{definition}[\cite{CLSW}]
    Let $S \subseteq D \subseteq \mathbb{R}^n$ and $F: D \rightrightarrows \mathbb{R}^n$ be a multi-valued mapping. It is said that $S$ is weakly forward invariant with respect to $F$ if for every $x_0 \in S$ there exist $\delta > 0$ and a trajectory $\varphi$ of $\dot{x} \in F(x)$ defined on $[0,\delta)$ such that $\varphi(0)=x_0$ and $\varphi(t) \in S$ for all $t \in [0,\delta)$.
\end{definition}

There are different conditions that guarantee weak forward invariance (cf. Chapter 4.2 from \cite{CLSW}). The one we will use involves the existence of tangent velocities at each point of the set that belong to the following tangent cone:

\begin{definition}[\cite{AC}, \cite{CLSW}]
    Let $S \subseteq \mathbb{R}^n$ and $x \in S$. The Bouligand tangent cone to $S$ at $x$ is defined as: $$T^B_S(x) = \left\{\lim_{n \to \infty} \frac{x_n-x}{t_n} : \{x_n\}_{n=1}^{\infty} \subseteq S, x_n \xrightarrow{} x, t_n \searrow 0\right\}.$$
\end{definition}

The following theorem will guarantee the weak forward invariance of the elements of the partitions with respect to the approximating mappings.

\begin{thm}[\cite{AC}]\label{localsol}
    Let $K$ be a locally closed subset of $\mathbb{R}^n$. Let $F: K \rightrightarrows \mathbb{R}^n$ be an upper semi-continuous multi-valued mapping with nonempty convex compact values. If $F(x) \cap T^B_K(x) \ne \emptyset$ for all $x \in K$, then $K$ is weakly forward invariant with respect to $F$.
\end{thm}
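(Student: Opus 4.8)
\medskip
\noindent\emph{Sketch of a proof.} The plan is to reduce to a compact situation, build $K$-adapted Euler polygons as approximate solutions, and then pass to the limit by a compactness-and-convexity argument.

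First I would localize. Fix $x_0\in K$; by local closedness choose $\rho>0$ with $K_0:=K\cap\bar B(x_0,\rho)$ compact, and observe that upper semi-continuity together with compactness of the values makes $F$ bounded on $K_0$, say $\|v\|\le M$ for every $v\in F(x)$, $x\in K_0$. Put $T:=\rho/(3M)$, so that any absolutely continuous arc issuing from $x_0$ with speed at most $2M$ cannot leave $\bar B(x_0,\rho)$ before time $T$. I would also record the only feature of the tangent cone that is used: $v\in T^B_K(x)$ is equivalent to $\liminf_{h\downarrow 0}h^{-1}\,\mathrm{dist}(x+hv,K)=0$; so, fixing once and for all a selection $x\mapsto v_x\in F(x)\cap T^B_K(x)$ (which exists by hypothesis), for every $x\in K_0$ and every $\eta>0$ there are arbitrarily small $h\in(0,\eta]$ with $\mathrm{dist}(x+hv_x,K)\le\eta h$.

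Next I would construct, for each $\varepsilon\in(0,M)$, a $K$-adapted Euler arc on $[0,T]$: a piecewise affine function $x_\varepsilon$ together with a partition $0=t_0<t_1<\dots<t_N=T$ such that $x_\varepsilon(t_i)\in K_0$, $t_{i+1}-t_i\le\varepsilon$, the constant velocity of $x_\varepsilon$ on $(t_i,t_{i+1})$ differs by at most $\varepsilon$ from $v_{x_\varepsilon(t_i)}\in F(x_\varepsilon(t_i))\cap T^B_K(x_\varepsilon(t_i))$, and $\mathrm{dist}(x_\varepsilon(t),K)=O(\varepsilon)$ uniformly in $t\in[0,T]$. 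A single step is immediate from the previous paragraph: from the node $y:=x_\varepsilon(t_i)$ choose $h\in(0,\varepsilon]$ with $\mathrm{dist}(y+hv_y,K)\le\varepsilon h$, let $x_\varepsilon(t_{i+1})\in K$ (in fact in $K_0$, by the speed bound and the choice of $T$) be that close to $y+hv_y$, set $t_{i+1}:=t_i+h$, and interpolate affinely. The delicate point — and the step I expect to be the main obstacle — is that these steps must exhaust the fixed interval $[0,T]$, whereas the tangency condition is only a $\liminf$ statement, so the set of admissible step lengths at a node need not be an interval and a greedy choice might drive the steps to zero prematurely. I would resolve this by a maximality argument: apply Zorn's lemma to partial $K$-adapted Euler arcs ordered by restriction — a chain has an upper bound, namely its union, whose terminal value is a limit of nodes lying in the compact set $K_0$, hence is again admissible — and then show that a maximal such arc defined on $[0,s]$ with $s<T$ can always be prolonged by one more step, a contradiction. (Alternatively one may follow Haddad's construction, which relaxes the requirement that the nodes lie exactly in $K$.)

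Finally I would let $\varepsilon\to0$. The arcs $x_\varepsilon$ are uniformly bounded and $2M$-Lipschitz, so by the Arzel\`a--Ascoli theorem a sequence $x_{\varepsilon_k}$ converges uniformly to an absolutely continuous $x$, and, passing to a further subsequence, $\dot x_{\varepsilon_k}\rightharpoonup\dot x$ weakly in $L^2(0,T)$. Since $\mathrm{dist}(x_{\varepsilon_k}(t),K)=O(\varepsilon_k)\to0$ and $K_0$ is closed, $x(t)\in K_0\subseteq K$ for every $t\in[0,T]$. To obtain $\dot x(t)\in F(x(t))$ for a.e. $t$, I would run the standard convexity argument: for a fixed $\delta>0$ and a.e. $t$, for all large $k$ the active node of $x_{\varepsilon_k}$ at instants near $t$ lies within $\delta$ of $x(t)$, whence $\dot x_{\varepsilon_k}$ is, on a neighbourhood of $t$, within $\varepsilon_k$ of the set $F\big(\{y\in K_0:\|y-x(t)\|\le\delta\}\big)$; by Mazur's lemma $\dot x(t)$ then belongs to the closed convex hull of that set for a.e. $t$, and letting $\delta\downarrow0$ and invoking upper semi-continuity together with the convexity and compactness of the values of $F$ collapses the resulting intersection to $F(x(t))$. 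Thus $x$ is a trajectory of $\dot x\in F(x)$ with $x(0)=x_0$ remaining in $K$ on $[0,T]$; its restriction to $[0,T)$ is exactly the trajectory required in the definition of weak forward invariance, and since $x_0\in K$ was arbitrary, $K$ is weakly forward invariant with respect to $F$.
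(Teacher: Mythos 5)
The paper does not prove this theorem; it cites it from Aubin and Cellina's monograph \cite{AC} (it is essentially the finite-dimensional viability theorem of Haddad/Aubin--Cellina). Your sketch — localize to a compact $K_0$, build $K$-adapted Euler polygons via the Bouligand tangency condition, use a maximality/Zorn argument to push the construction to a fixed $T>0$, then pass to the limit via Arzel\`a--Ascoli, Mazur's lemma and upper semi-continuity with convex compact values — is precisely the standard proof from that reference, and your handling of the one genuine subtlety (step lengths possibly shrinking, resolved by allowing countably many nodes whose accumulation point remains in the compact $K_0$) is correct.
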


In the following definitions we make precise what is meant by a ``suitable'' partition and by a refinement of it.

\begin{definition}[\cite{NR}]
    Let $X$ be a topological space and $$\mathcal{U} = \{U_{\alpha} : 1 \le \alpha < \alpha_0 \}$$ be a well-ordered family of its subsets. It is said that $\mathcal{U}$ is a relatively open partition of $X$ if:
    \begin{enumerate}[(i)]
        \item $U_{\alpha}$ is contained in $X \setminus \left(\bigcup\limits_{\beta < \alpha} U_{\beta}\right)$ and it is relatively open in it for every $\alpha$;
        \item $X = \bigcup\limits_{1 \le \alpha < \alpha_0} U_{\alpha}$.
    \end{enumerate}
\end{definition}

Note that each element of a relatively open partition is an intersection of an open set and a closed set, i.e. it is locally closed.

\begin{definition}[\cite{KR}]
    Let $\mathcal{U}^1$ and $\mathcal{U}^2$ be two relatively open partitions of $X$. It is said that $\mathcal{U}^2$ is a refinement of $\mathcal{U}^1$ ($\mathcal{U}^2 \prec \mathcal{U}^1$) if:
    \begin{enumerate}[(i)]
        \item for each element $U^2 \in \mathcal{U}^2$ there exists an element $U^1 \in \mathcal{U}^1$ with $U^2 \subseteq U^1$;
        \item if $\mathcal{U}^2 \ni U^2_{\beta_i} \subseteq U^1_{\alpha_i} \in \mathcal{U}^1$, $i=1,2$ with $\alpha_1 < \alpha_2$, then $\beta_1 < \beta_2$.
    \end{enumerate}
\end{definition}

For each $\varepsilon>0$  we use suitable upper semi-continuous multi-valued mappings with nonempty convex compact values as $\varepsilon$-approximations of $F$ in order to construct
$\varepsilon$-solutions of the considered differential inclusion, i.e. solutions to the inclusion $\dot x\in  F(x) + \varepsilon\overline{\mathbf{B}}$.

\begin{definition}[\cite{KR}]\label{invapr}
    Let $D$ be a locally closed subset of $\mathbb{R}^n$ and $F: \overline{D} \rightrightarrows \mathbb{R}^n$ be a multi-valued mapping with nonempty values. Let $\varepsilon>0$ be fixed and let $\mathcal{U}$ be a relatively open partition of $D$ and $\mathcal{G}=\{G_U : U \in \mathcal{U}\}$ be a family of multi-valued mappings.

    It is said that $(\mathcal{U},\mathcal{G})$ is an invariant $\varepsilon$-approximation of $F$ if:
    \begin{enumerate}[(i)]
        \item $G_{U_\alpha} : \overline{U_\alpha} \rightrightarrows \mathbb{R}^n$ is an upper semi-continuous multi-valued mapping with nonempty convex compact values for each $\alpha \in [1,\alpha_0)$;
        \item for each $\alpha \in [1,\alpha_0)$ and for each $x \in \overline{U_{\alpha}}$ it is true that $$G_{U_{\alpha}}(x) \cap T^B_{\overline{U_{\alpha}}}(x) \subseteq F(x) + \varepsilon\overline{\mathbf{B}};$$
        \item for each $\alpha \in [1,\alpha_0)$ and for each $x \in U_{\alpha}$ the intersection $G_{U_{\alpha}}(x) \cap T^B_{U_{\alpha}}(x)$ is nonempty.
    \end{enumerate}
\end{definition}

This is a slight modification of the original Definition 2.2 from \cite{KR}. All the results in \cite{KR} remain true with this modified definition since for each element of the partition the existence of tangent velocities is needed only at each point of the element and not of its closure.

The following definition is a generalization of the notion of an invariant $\varepsilon$-approximation for the case when a locally finite partition of the above kind cannot be constructed. Such an example is given in \cite{KR} as well as in the last section of this paper.

\begin{definition}[\cite{KR}]\label{invpapr}
    Let $D$ be a locally closed subset of $\mathbb{R}^n$ and $F: \overline{D} \rightrightarrows \mathbb{R}^n$ be a multi-valued mapping with nonempty values. Let $\varepsilon>0$ be fixed and let $\mathcal{U}$ be a $\sigma$-relatively open partition of $D$ (that is, $\mathcal{U}=\bigcup_{m=1}^{\infty} \mathcal{U}^m$, where each $\mathcal{U}^m$ is a relatively open partition of $D$) such that $\mathcal{U}^{m+1} \prec \mathcal{U}^m$ for all $m \in \mathbb{N}$. Let $\mathbf{Green} \, \mathcal{U}$ be a disjoint subfamily of $\mathcal{U}$ and let $\mathcal{G} = \{G_U : U \in \mathbf{Green} \, \mathcal{U}\}$ be a family of multi-valued mappings.

    It is said that $(\mathcal{U},\mathcal{G})$ is an invariant partial $\varepsilon$-approximation of $F$ if:
    \begin{enumerate}[(i)]
        \item if $U \in (\mathbf{Green} \, \mathcal{U}) \cap \mathcal{U}^m$ then $U \in \mathcal{U}^k$ for all $k \ge m$;
        \item for each point $x \in \cup (\mathbf{Green} \, \mathcal{U})$ there exists a neighborhood of $x$ which intersects at most finitely many elements of $\mathbf{Green} \, \mathcal{U}$;
        \item the set $\cup (\mathbf{Green} \, \mathcal{U})$ is open in $D$;
        \item $(\mathbf{Green} \, \mathcal{U}, \mathcal{G})$ is an invariant $\varepsilon$-approximation of $F$, i.e.
        \begin{enumerate}[(a)]
            \item $G_U : \overline{U} \rightrightarrows \mathbb{R}^n$ is an upper semi-continuous multi-valued mapping with nonempty convex compact values for each $U \in \mathbf{Green} \, \mathcal{U}$;
            \item for each $U  \in \mathbf{Green} \, \mathcal{U}  $ and for each $x \in \overline{U} $ it is true that  $$G_{U}(x) \cap T^B_{\overline{U}}(x) \subseteq F(x) + \varepsilon\overline{\mathbf{B}};$$
            \item for each $U  \in \mathbf{Green} \, \mathcal{U}  $ and for each $x \in U$ the intersection $G_{U}(x) \cap T^B_{U}(x)$ is nonempty.
            \end{enumerate}
    \end{enumerate}
\end{definition}

Roughly speaking, the meaning of Definition \ref{invpapr} is that the domain can be divided into two parts: ``good points'' (the set $\cup (\mathbf{Green} \, \mathcal{U})$) and ``bad points'' (the complement of $\cup (\mathbf{Green} \, \mathcal{U})$ in $D$). The set of ``good points'' can be partitioned into a locally finite family of subsets each of which is weakly forward invariant with respect to a suitably chosen approximating mapping. Depending on the problem, additional efforts are necessary to deal with the ``bad points''.  In \cite{KR} a scheme is suggested for proofs of the existence of solutions of differential inclusions using this structure. It could be implemented in different cases if one can handle the ``bad points'' of the right-hand side.

\begin{rem}[\cite{KR}]
    There is a natural linear order $\prec$ in the family $\mathbf{Green}\,\mathcal{U}$ inherited from the order in $\mathcal{U}_m, m = 1,2,\dots$.
\end{rem}

The following proposition is a slight modification of Proposition 2.3(ii) and (iii) from \cite{KR} that is stated for an invariant $\varepsilon$-approximation with a locally finite relatively open partition of the domain. Since $\mathbf{Green}\,\mathcal{U}$ is locally finite and open in $D$ by definition, every point of $\cup\left(\mathbf{Green}\,\mathcal{U}\right)$ has a neighborhood $V$ (in $D$) that is contained in $\cup\left(\mathbf{Green}\,\mathcal{U}\right)$ and intersects finitely many elements of $\mathbf{Green}\,\mathcal{U}$. Thus, there exists a positive integer $m$ such that $V \cap \mathcal{U}^m$ is a relatively open partition of $V \cap D$ with finitely many elements. Then we are in the situation of the proposition cited from \cite{KR}. This idea is implemented in Theorem 3.4 from the same paper.

\begin{prop}\label{ulimit-sol}
    Let $D$ be a locally closed subset of $\mathbb{R}^n$ and $F: \overline{D} \rightrightarrows \mathbb{R}^n$ be uniformly bounded. Let $\mathcal{U}$ be a $\sigma$-relatively open partition of $D$ and $(\mathcal{U},\mathcal{G})$ be an invariant partial $\varepsilon$-approximation of $F$. Then:
    \begin{enumerate}[(i)]
        \item for each absolutely continuous function $\varphi: [0,T] \to \cup\left(\mathbf{Green}\,\mathcal{U}\right)$ with respect to which the elements of $\mathbf{Green}\,\mathcal{U}$ are forward invariant, the function $$t \mapsto \alpha(t), \text{where } \varphi(t) \in U_{\alpha(t)}$$ is monotone increasing;
        \item if $\varphi_k : [0,T] \to \cup\left(\mathbf{Green}\,\mathcal{U}\right)$, $k=1,2,\dots$ is a sequence of $\varepsilon$-solutions of $\dot{x} \in F(x)$ with respect to which the elements of $\mathbf{Green}\,\mathcal{U}$ are forward invariant and $\varphi : [0,T] \to \cup\left(\mathbf{Green}\,\mathcal{U}\right)$ is its uniform limit, then $\varphi$ is an $\varepsilon$-solution of $\dot{x} \in F(x)$.
    \end{enumerate}
\end{prop}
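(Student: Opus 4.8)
The plan is to localize and then quote the finite-partition result. By the remark preceding the proposition, everything should reduce to Proposition~2.3 of \cite{KR}, which treats an invariant $\varepsilon$-approximation over a \emph{finite} relatively open partition, and the resulting local conclusions can be patched over $[0,T]$ using compactness. So the only real work is a localization lemma; after that, (i) and (ii) are short.

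First I would establish the localization lemma: every $p\in\cup(\mathbf{Green}\,\mathcal{U})$ has a neighborhood $V=D\cap W$ (with $W\subseteq\mathbb{R}^n$ open) such that $V\subseteq\cup(\mathbf{Green}\,\mathcal{U})$ and $V$ meets only finitely many elements $U_1,\dots,U_N$ of $\mathbf{Green}\,\mathcal{U}$ — this uses Definition~\ref{invpapr}(ii),(iii) — and, by Definition~\ref{invpapr}(i), $U_1,\dots,U_N\in\mathcal{U}^m$ for some $m$. Since $\mathbf{Green}\,\mathcal{U}$ is disjoint and covers $V$, each point of $V$ lies in exactly one $U_j$, which is then the element of the partition $\mathcal{U}^m$ through it; hence the trace $V\cap\mathcal{U}^m$ equals $\{U_1\cap V,\dots,U_N\cap V\}$ and is a finite relatively open partition of $V$ (property (ii) of that notion is immediate, and property (i) follows by intersecting the relatively open sets witnessing that $\mathcal{U}^m$ partitions $D$ with $W$). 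Restricting each $G_{U_j}$ to $\overline{U_j\cap V}$ and using $\overline{U_j\cap V}\subseteq\overline{U_j}$ (so $T^B_{\overline{U_j\cap V}}(x)\subseteq T^B_{\overline{U_j}}(x)$) together with $U_j\cap V=U_j\cap W$ and $W$ open (so $T^B_{U_j\cap V}(x)=T^B_{U_j}(x)$ for $x\in U_j\cap V$), I would check from Definition~\ref{invpapr}(iv) that $\bigl(V\cap\mathcal{U}^m,\{G_{U_j}|_{\overline{U_j\cap V}}\}\bigr)$ is an invariant $\varepsilon$-approximation of $F$ in the sense of Definition~\ref{invapr}, over the finite partition $V\cap\mathcal{U}^m$ of the locally closed set $V$. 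Finally, the Remark preceding the proposition gives that the order $\prec$ on $\{U_1,\dots,U_N\}$ inherited from $\mathbf{Green}\,\mathcal{U}$ coincides with their order as elements of $\mathcal{U}^m$.

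For (i): given $\varphi$ and $t_0\in[0,T]$, continuity of $\varphi$ yields $\eta>0$ with $\varphi(I)\subseteq V$ for $I:=(t_0-\eta,t_0+\eta)\cap[0,T]$ and $V$ the neighborhood from the lemma at $\varphi(t_0)$; forward invariance of the $U_j$ with respect to $\varphi$, together with continuity of $\varphi$, transfers to forward invariance of the elements of $V\cap\mathcal{U}^m$ with respect to $\varphi|_I$ (if $\varphi(t)\in U_j\cap V$, then $\varphi(\tau)\in U_j\cap V$ on some $[t,t+\delta)$). Proposition~2.3(ii) of \cite{KR} then makes the index map monotone increasing on $I$, i.e.\ by the order-agreement $t\mapsto\alpha(t)$ is monotone increasing on $I$; since this holds near every point of $[0,T]$, a standard connectedness argument on the interval (e.g.\ for $s<t$ take $c:=\sup\{u\in[s,t]:\alpha|_{[s,u]}\text{ monotone increasing}\}$ and show $c=t$) promotes it to monotonicity on all of $[0,T]$.

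For (ii): as $F$ is uniformly bounded, $\|\dot\varphi_k\|\le\sup_{\overline D}\|F\|+\varepsilon$ a.e., so $\varphi$, being the uniform limit of the $\varphi_k$, is Lipschitz, hence absolutely continuous. Fixing $t_0$ and taking $V=D\cap W$ and $m$ from the lemma at $\varphi(t_0)$, I would choose $\eta>0$ with $\varphi(\overline I)\subseteq V$, $I:=(t_0-\eta,t_0+\eta)\cap[0,T]$; since $\varphi(\overline I)$ is a compact subset of the open set $W$, uniform convergence together with $\varphi_k(\overline I)\subseteq D$ forces $\varphi_k(\overline I)\subseteq D\cap W=V$ for all large $k$. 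As in (i) the elements of $V\cap\mathcal{U}^m$ are forward invariant with respect to each such $\varphi_k|_{\overline I}$, and $\varphi_k|_{\overline I}\to\varphi|_{\overline I}$ uniformly, so Proposition~2.3(iii) of \cite{KR} yields $\dot\varphi(t)\in F(\varphi(t))+\varepsilon\overline{\mathbf{B}}$ for a.e.\ $t\in I$; covering $[0,T]$ by finitely many such intervals gives this a.e.\ on $[0,T]$, which together with absolute continuity says that $\varphi$ is an $\varepsilon$-solution of $\dot x\in F(x)$. The analytic content sits entirely in Proposition~2.3 of \cite{KR}, so the main obstacle is the bookkeeping of the localization lemma — verifying that the traced data genuinely form an invariant $\varepsilon$-approximation over a finite partition and that the two linear orders match — plus the routine but slightly delicate passage from the local monotonicity/solution statements to the global ones on $[0,T]$.
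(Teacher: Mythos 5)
Your proposal follows essentially the same route as the paper, which proves this proposition only by the localization remark preceding its statement: a relatively open neighborhood $V\subseteq\cup(\mathbf{Green}\,\mathcal{U})$ of any point meets only finitely many green elements, some $\mathcal{U}^m$ already contains all of them so that $V\cap\mathcal{U}^m$ is a finite relatively open partition of $V$, and one then quotes Proposition 2.3(ii),(iii) of \cite{KR} for finite/locally finite invariant $\varepsilon$-approximations. You have simply fleshed out that sketch with the needed bookkeeping (how the traced mappings satisfy Definition~\ref{invapr}, order agreement via the Remark, transfer of forward invariance along $\varphi_k|_{\overline I}$, eventual containment of $\varphi_k(\overline I)$ in $V$ by uniform convergence, and the compactness/connectedness patching), and all of these steps are correct.
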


\begin{definition}[\cite{KR}]
    Let $D$ be a locally closed subset of $\mathbb{R}^n$ and $F: \overline{D} \rightrightarrows \mathbb{R}^n$ be a multi-valued mapping with nonempty values. Let $\mathcal{U}^i$ be a $\sigma$-relatively open partition of $D$ and $(\mathcal{U}^i,\mathcal{G}^i)$ be an invariant partial $\varepsilon_i$-approximation of $F$, $i=1,2$. It is said that $(\mathcal{U}^2,\mathcal{G}^2)$ is a refinement of $(\mathcal{U}^1,\mathcal{G}^1)$ if:
    \begin{enumerate}[(i)]
        \item $0 < \varepsilon_2 < \varepsilon_1$;
        \item for each $U^2 \in \mathbf{Green} \,\mathcal{U}^2$ there exists $U^1 \in \mathbf{Green} \,\mathcal{U}^1$ with $U^2 \subseteq U^1$;
        \item if $\mathbf{Green} \,\mathcal{U}^2 \ni U^2_i \subseteq U^1_i \in \mathbf{Green} \,\mathcal{U}^1$, $i=1,2$ with $U^1_2 \prec U^1_1$, then $U^2_2 \prec U^2_1$;
        \item if $\mathbf{Green} \,\mathcal{U}^2 \ni U^2 \subseteq U^1 \in \mathbf{Green} \,\mathcal{U}^1$, then $G_{U^2}(x) \subseteq G_{U^1}(x)$ for each $x \in \overline{U^2}$.
    \end{enumerate}
\end{definition}

These concepts have been used in \cite{KR} to study the existence of solutions of differential inclusions of the considered form. In the next section, we will make use of them in order to investigate further this problem.

\section{Main result}\label{main}

The following definition is motivated by the concept of stability and the related notion of a Lyapunov function. The function we define strictly increases at least by a constant rate along the trajectories, so they ``move away'' from the set of ``bad points''. We require continuity on the whole domain and smoothness on each element of the partition of the set of ``good points''.

\begin{definition}\label{quasilyapunov}
    Let $D$ be a locally closed subset of $\mathbb{R}^n$ and $F: \overline{D} \rightrightarrows \mathbb{R}^n$ be a multi-valued mapping with nonempty values. Let $\mathcal{U}$ be a $\sigma$-relatively open partition of $D$ and $(\mathcal{U},\mathcal{G})$ be an invariant partial $\varepsilon$-approximation of $F$. It is said that $w: D \to \mathbb{R}$ is a quasi-Lyapunov function for $(\mathcal{U},\mathcal{G})$ if:
    \begin{enumerate}[(i)]
        \item $w(x) \ge 0$ for all $x \in D$ and $w(x) = 0$ iff $x \notin \cup(\mathbf{Green} \,\mathcal{U})$;
        \item $w$ is a continuous function;
        \item the restriction $w\!\restriction_{U}$ of $w$ to $U$ is a smooth function for all $U \in \mathbf{Green} \,\mathcal{U}$ (i.e. it is a restriction of a smooth function which is defined on an open set containing $U$);
        \item for each $U \in \mathbf{Green} \,\mathcal{U}$ and for each $x \in U$ the following inequality holds: $$\max_{y \in G_U(x) \cap T^B_{U}(x)} \langle \mathbf{grad} \, w(x) , y \rangle \ge 1.$$
    \end{enumerate}
\end{definition}

The maximum from condition $(iv)$ of the above definition is required to be uniformly separated from zero by a positive number. Without loss of generality, we assume that this number is equal to $1$. This guarantees that $\varepsilon$-solutions starting from ``good points'' always ``stay away'' from the ``bad points'' as well as uniform limits of such $\varepsilon$-solutions. In this way, using Proposition \ref{ulimit-sol}(ii), we can prove the existence of $\varepsilon$-solutions starting from the boundary of the set of ``bad points''.

\begin{prop}\label{esolution}
    Let $D$ be an intersection of an open subset $S$ and a closed subset $K$ of $\mathbb{R}^n$ and $F : \overline{D} \rightrightarrows \mathbb{R}^n$ be a uniformly bounded multi-valued mapping with nonempty values.
    Let $\mathcal{U}$ be a $\sigma$-relatively open partition of $D$ and $(\mathcal{U},\mathcal{G})$ be an invariant partial $\varepsilon$-approximation of $F$. Let $w: D \to \mathbb{R}$ be a quasi-Lyapunov function for $(\mathcal{U},\mathcal{G})$.

    Then for each point $x_0 \in \overline{\cup(\mathbf{Green} \,\mathcal{U})} \cap D$ there exists an $\varepsilon$-solution $\varphi(\cdot)$ of $\dot{x} \in F(x)$ starting from $x_0$ and defined on the interval $[0,T)$ ($T=+\infty$ or $\varphi(t)$ tends to a point in $\mathbb{R}^n \setminus S$ whenever $t \to T$) such that $w(\varphi(t)) \ge w(x_0) + t$ for all $t \in [0,T)$. Moreover, if the starting point $x_0$ belongs to $\cup(\mathbf{Green} \,\mathcal{U})$, then each element of $\mathbf{Green} \,\mathcal{U}$ is forward invariant with respect to the corresponding $\varepsilon$-solution $\varphi(\cdot)$ starting from $x_0$.
\end{prop}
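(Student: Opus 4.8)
The plan is to treat two cases according to whether $x_0\in\cup(\mathbf{Green}\,\mathcal{U})$ or $x_0$ lies only on the boundary of this set. The first case is the ``engine'' of the argument and the second is obtained from it by an approximation-and-limit procedure.

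\emph{Case 1: $x_0\in\cup(\mathbf{Green}\,\mathcal{U})$.} By property (ii) of Definition \ref{invpapr}, $x_0$ has a neighbourhood $V$ in $D$ contained in $\cup(\mathbf{Green}\,\mathcal{U})$ that meets only finitely many elements of $\mathbf{Green}\,\mathcal{U}$, and by property (i) there is a single $m$ with $V\cap\mathcal{U}^m$ a finite relatively open partition of $V\cap D$ into Green sets. Thus I am locally in the setting of Theorem 3.4 / Proposition 2.3 of \cite{KR}: on each element $U_\alpha$ the approximating map $G_{U_\alpha}$ is upper semi-continuous with nonempty convex compact values, $\overline{U_\alpha}$ is locally closed, and condition (iv)(c) of Definition \ref{invpapr} gives $G_{U_\alpha}(x)\cap T^B_{U_\alpha}(x)\neq\emptyset$, so Theorem \ref{localsol} yields a trajectory of $\dot x\in G_{U_\alpha}(x)$ that stays in $U_\alpha$ for a positive time. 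I concatenate such pieces following the linear order $\prec$ on the (finitely many) local Green sets, exactly as in the construction of $\varepsilon$-solutions in \cite{KR}: when a trajectory leaves its current $U_\alpha$ it must enter some $U_\beta$ with $\beta>\alpha$ (this is the monotonicity in Proposition \ref{ulimit-sol}(i), which follows from condition (i) of the relatively open partition), so the index strictly increases and the process terminates after finitely many switches, after which I restart the local analysis around the new point. Because $G_{U_\alpha}(x)\cap T^B_{U_\alpha}(x)\subseteq F(x)+\varepsilon\overline{\mathbf B}$ by (iv)(b), the resulting absolutely continuous $\varphi$ satisfies $\dot\varphi(t)\in F(\varphi(t))+\varepsilon\overline{\mathbf B}$ a.e., i.e.\ it is an $\varepsilon$-solution, and by construction each element of $\mathbf{Green}\,\mathcal{U}$ is forward invariant with respect to it. The new ingredient is the quasi-Lyapunov estimate: on each piece, by the chain rule $\frac{d}{dt}w(\varphi(t))=\langle\mathbf{grad}\,w(\varphi(t)),\dot\varphi(t)\rangle$ a.e., and I may \emph{choose} the tangent velocity at each step to realise the max in (iv) of Definition \ref{quasilyapunov}, so $\frac{d}{dt}w(\varphi(t))\ge 1$ a.e.; since $w$ is continuous on all of $D$ (condition (ii)), integrating across the finitely many pieces gives $w(\varphi(t))\ge w(x_0)+t$. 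I use uniform boundedness of $F$ (hence of $\dot\varphi$) together with $D=S\cap K$ to extend $\varphi$ to a maximal interval $[0,T)$ with the stated alternative: either $T=+\infty$, or $\varphi$ runs out of $D$, and since $K$ is closed the only way out is $\varphi(t)\to\mathbb{R}^n\setminus S$.

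\emph{Case 2: $x_0\in\overline{\cup(\mathbf{Green}\,\mathcal{U})}\cap D$ but $x_0\notin\cup(\mathbf{Green}\,\mathcal{U})$.} Pick $x_k\in\cup(\mathbf{Green}\,\mathcal{U})$ with $x_k\to x_0$, and let $\varphi_k$ be the $\varepsilon$-solution from Case 1 starting at $x_k$, defined on a maximal interval $[0,T_k)$. Uniform boundedness of $F$ makes the family $\{\varphi_k\}$ uniformly Lipschitz; a standard argument (using the alternative characterisation of $T_k$ and that $x_0\in D$ so $x_0$ is at positive distance from $\mathbb{R}^n\setminus S$ when $x_0\in S$, or a routine exhaustion near $\partial S$) produces a common $T>0$ on which all $\varphi_k$ are defined. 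By Arzelà–Ascoli, along a subsequence $\varphi_k\to\varphi$ uniformly on $[0,T']$ for any $T'<T$, with $\varphi(0)=x_0$. Each $\varphi_k$ maps into $\cup(\mathbf{Green}\,\mathcal{U})$ and the elements of $\mathbf{Green}\,\mathcal{U}$ are forward invariant with respect to it, so Proposition \ref{ulimit-sol}(ii) applies and $\varphi$ is an $\varepsilon$-solution of $\dot x\in F(x)$. Finally, $w(\varphi_k(t))\ge w(x_k)+t$ for all $k$, and passing to the limit using continuity of $w$ (condition (ii)) and $w(x_k)\to w(x_0)$ gives $w(\varphi(t))\ge w(x_0)+t$; in particular $w(\varphi(t))>0$ for $t>0$, so $\varphi(t)\in\cup(\mathbf{Green}\,\mathcal{U})$ for $t>0$ by condition (i), and $\varphi$ may then be continued using Case 1 to the maximal interval with the stated alternative.

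\emph{Main obstacle.} The delicate point is Case 2: ensuring that the approximating $\varepsilon$-solutions $\varphi_k$ are all defined on one common interval $[0,T)$ independent of $k$, so that a uniform limit exists and Proposition \ref{ulimit-sol}(ii) can be invoked. This is where the quasi-Lyapunov inequality does double duty — it not only forces $w(\varphi_k(t))\ge w(x_k)+t$, which keeps the limit trajectory away from the bad set, but (combined with uniform boundedness of $F$, hence an a priori Lipschitz bound) it prevents the $\varphi_k$ from escaping $D$ prematurely near $x_0$; the exit alternative ``$\varphi(t)\to\mathbb{R}^n\setminus S$'' has to be handled by a careful choice of the neighbourhood of $x_0$ inside $S$ together with the finite bound on the number of index switches coming from Proposition \ref{ulimit-sol}(i). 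The local-to-global bookkeeping of the switching indices — showing the concatenation in Case 1 genuinely terminates and produces an absolutely continuous function — is the other place where care is needed, but this is essentially the construction already carried out in \cite{KR} and I would cite it rather than redo it.
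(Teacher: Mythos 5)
Your overall structure matches the paper exactly (Case 1 for $x_0$ in a Green set, Case 2 by approximation and Arzel\`a--Ascoli), and your Case 2 is essentially correct and identical in spirit to the paper's. But Case 1 has a genuine gap at the crucial step.

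You write that, on each piece, you ``may \emph{choose} the tangent velocity at each step to realise the max in (iv) of Definition \ref{quasilyapunov}.'' This is not available to you. Theorem \ref{localsol} applied to $G_{U_\alpha}$ yields \emph{some} trajectory of $\dot x\in G_{U_\alpha}(x)$ staying in $U_\alpha$; you have no control over which measurable selection of velocities it uses, and a posteriori forcing $\dot\varphi(t)$ to be a maximizer of $\langle\mathbf{grad}\,w(\varphi(t)),\cdot\rangle$ would change the trajectory (and also generally destroys upper semi-continuity and convexity of the resulting selection map). The paper's fix is to \emph{restrict the right-hand side before} invoking viability: define
$$\widetilde G_{U_\alpha}(z):=\bigl\{y\in G_{U_\alpha}(z):\langle\mathbf{grad}\,w(z),y\rangle\ge 1\bigr\},$$
and observe that this is still upper semi-continuous with nonempty convex compact values (the inequality cuts $G_{U_\alpha}(z)$ by a closed half-space that moves continuously with $z$), and crucially that $\widetilde G_{U_\alpha}(z)\cap T^B_{U_\alpha}(z)\ne\emptyset$ — this is exactly what Definition \ref{quasilyapunov}(iv) provides, since the maximum there is taken over $G_{U_\alpha}(z)\cap T^B_{U_\alpha}(z)$, so the maximizer lies in both sets. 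Now Theorem \ref{localsol} applied to $\widetilde G_{U_\alpha}$ gives a trajectory staying in $U_\alpha$ \emph{every} velocity of which satisfies the inner-product inequality, and the chain rule plus integration gives $w(\varphi(t))\ge w(x)+(t-\tau)$ without any further choice. Without this device your chain-rule estimate is unjustified.

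One smaller point: the construction in Case 1 does not terminate ``after finitely many switches'' followed by a restart — the trajectory may switch countably often (this is exactly the chattering phenomenon in the paper's Section \ref{example}). The paper's inductive extension handles accumulation times $\hat t$ by noting that $\varphi(\hat t)$ exists by Cauchy, and if $\varphi(\hat t)\in D$ then the inequality $w(\varphi(\hat t))\ge w(x_0)+\hat t>0$ together with Definition \ref{quasilyapunov}(i) forces $\varphi(\hat t)\in\cup(\mathbf{Green}\,\mathcal{U})$, so the extension continues. You gesture at this with ``restart the local analysis around the new point'' and with the remark that the quasi-Lyapunov estimate keeps the limit trajectory off the bad set, but it should be made explicit that this is what lets the induction pass through limit times and therefore produces the maximal interval with the stated exit alternative.
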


\begin{proof}
    First, we consider the case where $x_0 \in \cup(\mathbf{Green} \,\mathcal{U})$.

    For each point $x \in \cup(\mathbf{Green} \,\mathcal{U})$ there exists a unique index $\alpha_x$ such that $x \in U_{\alpha_x}$. We define a multi-valued mapping $\widetilde{G}_{U_{\alpha_x}} : U_{\alpha_x} \rightrightarrows \mathbb{R}^n$, $$\widetilde{G}_{U_{\alpha_x}} (z) := \left\{ y \in G_{U_{\alpha_x}}(z) : \langle \mathbf{grad} \,w(z), y \rangle \ge 1\right\}.$$
    Since $\widetilde{G}_{U_{\alpha_x}}(z) \subseteq G_{U_{\alpha_x}}(z)$ for all $z \in U_{\alpha_x}$, every solution of the inclusion with right-hand side $\widetilde{G}_{U_{\alpha_x}}$ is a solution of the inclusion with right-hand side $G_{U_{\alpha_x}}$. Using this, we will construct an $\varepsilon$-solution of $\dot{x} \in F(x)$ such that $w$ increases at least by a constant rate along it.

    Since $G_{U_{\alpha_x}}$ has nonempty convex compact values and is uniformly bounded (because $F$ is uniformly bounded) and Definition \ref{quasilyapunov}(iv) holds, the same is true for the mapping $\widetilde{G}_{U_{\alpha_x}}$. It is also straightforward to check that $\widetilde{G}_{U_{\alpha_x}}$ is upper semi-continuous on its domain $U_{\alpha_x}$.

    Moreover, $\widetilde{G}_{U_{\alpha_x}}(z) \cap T^B_{U_{\alpha_x}} (z) \ne \emptyset$ for all $z \in U_{\alpha_x}$ because of Definition \ref{quasilyapunov}(iv).
    Then, applying Theorem \ref{localsol}, we obtain the existence of $t_x >0$ and an absolutely continuous function $\varphi_{x,\tau} : [\tau,\tau+t_x) \to U_{\alpha_x}$, which is a solution of the differential inclusion
    $$\left| \begin{array}{l}
    \dot{x}(t) \in \widetilde{G}_{U_{\alpha_x}}(x(t)) \subseteq G_{U_{\alpha_x}}(x(t)) \text{ a.e. on } [\tau,\tau+t_x)\\
    x(\tau) = x\\
    \end{array} \right..$$

    Since $\varphi_{x,\tau}(t) \in U_{\alpha_x}$ for all $t \in [\tau,\tau+t_x)$, then
    $$\dot{\varphi}_{x,\tau}(t) \in \widetilde G_{U_{\alpha_x}}(\varphi_{x,\tau}(t)) \cap T^B_{U_{\alpha_x}} (\varphi_{x,\tau}(t)) \subseteq F(\varphi_{x,\tau}(t)) + \varepsilon \overline{\mathbf{B}} \text{ a.e. on } [\tau,\tau+t_x).$$

    According to the definition of the mapping $\widetilde{G}_{U_{\alpha_x}}$, we have
    $$\frac{d}{dt} w(\varphi_{x,\tau}(t)) = \langle \mathbf{grad} \,w(\varphi_{x,\tau}(t)), \dot{\varphi}_{x,\tau}(t) \rangle \ge 1 \text{ for almost every }t \in [\tau,\tau+t_x).$$
    Since $w(\varphi_{x,\tau}(\cdot))$ is absolutely continuous on $[\tau,t]$ for all $t \in [\tau,\tau + t_x)$ as a composition of a smooth function and an absolutely continuous function defined on a compact interval, after integration we obtain that $$w(\varphi_{x,\tau}(t)) \ge w(x) + (t-\tau) \text{ for all }t \in [\tau,\tau+t_x).$$

    In particular, for the point $x_0$ there exist $t_{x_0}>0$ and an absolutely continuous function $\varphi_{x_0,0} : [0,t_{x_0}) \to U_{\alpha_{x_0}}$. We inductively define an absolutely continuous function $\varphi : [0,T) \to D$.
    Let $\varphi(t) := \varphi_{x_0,0}(t)$ for all $t \in [0,t_{x_0})$. Assume that $\varphi$ is defined on some interval $[0,\hat{t})$ and $w(\varphi(t)) \ge w(x_0) + t$ for all $t \in [0,\hat{t})$.
    Then for each increasing sequence $\{t_n\}_{n=1}^{\infty}, t_n \xrightarrow[]{} \hat{t}$, by using the uniform boundedness of the values of $F$ by $c$ and assuming that $\varepsilon \in (0,1)$, we have
    $$\left\|\varphi(t_n)-\varphi(t_{n-1})\right\| \le \int_{t_{n-1}}^{t_n} \left\|\dot{\varphi}(\tau)\right\|\mathrm{d}\tau \le (c+1)(t_n-t_{n-1}).$$
    This means that the sequence $\left\{\varphi(t_n)\right\}_{n=1}^{\infty}$ is a Cauchy sequence. Hence, there exists $\varphi(\hat{t}) := \lim_{t \to \hat{t}} \varphi(t)$.
    If $\varphi(\hat{t}) \in D$, then $w(\varphi(\hat{t})) \ge w(x_0) + \hat{t}$ by taking a limit as $t \to \hat{t}$ and using the continuity of $w$. Then from Definition \ref{quasilyapunov}(i) and $w(\varphi(\hat{t})) > 0$ we have $\varphi(\hat{t}) \in \cup(\mathbf{Green} \,\mathcal{U})$. Hence, we can define an absolutely continuous extension of the function $\varphi$ by $\varphi_{\varphi(\hat{t}),\hat{t}}$ on the interval $\left[\hat{t},\hat{t}+t_{\varphi(\hat{t})}\right)$.
    Moreover, for each $t \in \left[\hat{t},\hat{t}+t_{\varphi(\hat{t})}\right)$ we have
    \begin{align*}
    w(\varphi(t)) &= w(\varphi_{\varphi(\hat{t}),\hat{t}}(t)) \ge w(\varphi_{\varphi(\hat{t}),\hat{t}}(\hat{t})) + (t-\hat{t})\\
    &= w(\varphi(\hat{t}))+ (t-\hat{t})
    \ge w(x_0) + t.
    \end{align*}

    Let $[0,T_0)$ be the maximal interval on which $\varphi$ can be defined this way.
    If $T_0 \ne +\infty$, let $\varphi(t) \xrightarrow[]{} y$ whenever $t \to T_0$ (the limit exists as is seen above).
    If $y \in D$, then $y \in \cup(\mathbf{Green} \,\mathcal{U})$ by following the same arguments as above, so we can define an absolutely continuous extension of $\varphi$ in the same way as above. This contradicts the maximality of the interval.
    Then $y \notin D$. Since $D = S \cap K$, where $S$ is an open subset and $K$ is a closed subset of $\mathbb{R}^n$, the only possibility is that $\varphi(t)$ tends to the boundary of $S$ whenever $t \to T_0$, i.e. $y \in \mathbb{R}^n \setminus S$.
    For each $t \in [0,T_0)$ we have $$\left\|\varphi(t)-x_0\right\| = \left\|\int_{0}^{t} \dot{\varphi}(\tau)\mathrm{d}\tau\right\| \le \int_{0}^{t} \left\|\dot{\varphi}(\tau)\right\|\mathrm{d}\tau \le (c+1)t.$$
    Then, by taking a limit as $t \to T_0$ in $\left\|\varphi(t)-x_0\right\| \le (c+1)t$, we obtain that $\|y-x_0\| \le (c+1)T_0$. Since $y \in \mathbb{R}^n \setminus S$, $\mathrm{dist}\left(x_0,\mathbb{R}^n \setminus S\right) \le \|y-x_0\| \le (c+1)T_0$. Since $S$ is an open subset of $\mathbb{R}^n$, we have
    \begin{equation}\label{time}
        T_0 \ge \frac{\mathrm{dist}(x_0,\mathbb{R}^n \setminus S)}{c+1} > 0.
    \end{equation}

    Now, let $x_0 \in \overline{\cup(\mathbf{Green} \,\mathcal{U})} \cap D$. Then there exists a sequence of points $\{x_k\}_{k=1}^{\infty} \subseteq \cup(\mathbf{Green} \,\mathcal{U})$ such that $x_k \xrightarrow[k \to \infty]{} x_0$.
    For each positive integer $k$ the first part of the proof implies the existence of an absolutely continuous function $\varphi_k : [0,T_k) \to D$ such that $\varphi_k$ is an $\varepsilon$-solution of $\dot{x} \in F(x)$ starting from $x_k$, each element of $\mathbf{Green} \,\mathcal{U}$ is forward invariant with respect to $\varphi_k$ and $w(\varphi_k(t)) \ge w(x_k) + t$ for all $t \in [0,T_k)$.

    The convergence of the sequence $\{x_k\}_{k=1}^{\infty}$ implies the existence of a constant $k_0 \in \mathbb{N}$ such that for all $k \ge k_0$ we have $\|x_k-x_0\| \le \frac{1}{2} \mathrm{dist}(x_0,\mathbb{R}^n \setminus S)$. Using the lipschitzness of the distance function and the fact that $S$ is an open subset of $\mathbb{R}^n$, for each $k \ge k_0$ we have (as the estimate for $T_0$ in (\ref{time}))
    \begin{equation}\label{time2}
        \begin{aligned}
            T_k   {\ge} \frac{\mathrm{dist}(x_k,\mathbb{R}^n \setminus S)}{c+1} &\ge \frac{\mathrm{dist}(x_0,\mathbb{R}^n \setminus S) - \|x_k-x_0\|}{c+1}\\ &\ge T:= \frac{\mathrm{dist}(x_0,\mathbb{R}^n \setminus S)}{2(c+1)} > 0.
        \end{aligned}
    \end{equation}

    Since $F$ is uniformly bounded, the sequence $\{\varphi_k(\cdot)\}_{k=k_0}^{\infty}$ of functions defined on the interval $[0,T]$ is uniformly bounded and equicontinuous. Applying the Arzelà–Ascoli theorem, without loss of generality, we may assume that the sequence is uniformly convergent to an absolutely continuous function $\varphi : [0,T] \to D$.

    Since $w$ is continuous, by taking the limit as $k \to \infty$ in the inequality $w(\varphi_k(t)) \ge w(x_k) + t$, we have $w(\varphi(t)) \ge w(x_0) + t$ for all $t \in [0,T]$. Hence, $w(\varphi(t)) > 0$ for all $t \in (0,T]$. According to Definition \ref{quasilyapunov}, $\varphi(t) \in \cup(\mathbf{Green} \,\mathcal{U})$ for all $t \in (0,T]$. Then Proposition \ref{ulimit-sol}(ii) implies that $\varphi$ is an $\varepsilon$-solution of $\dot{x} \in F(x)$ starting from $x_0$ and defined on $[0,T]$.

    Following the inductive construction of an $\varepsilon$-solution from the first part of the proof for the point $\varphi(T) \in \cup(\mathbf{Green} \,\mathcal{U})$ instead for the point $x_0$, we find a maximal interval $\left[0,\overline{T}\right)$ on which the solution is defined such that $\overline{T}=+\infty$ or $\varphi(t)$ tends to a point in $\mathbb{R}^n \setminus S$ whenever $t \to \overline{T}$.
\end{proof}

Assuming closedness of the values of the right-hand side and existence of successive refinements of the invariant partial $\varepsilon$-approximation, we can pass to the limit as $\varepsilon \to 0$ and obtain a solution of the considered differential inclusion.

\begin{thm}\label{sol}
    Let $D$ be a locally closed subset of $\mathbb{R}^n$ and $F : \overline{D} \rightrightarrows \mathbb{R}^n$ be a uniformly bounded multi-valued mapping with nonempty closed values. Let $\mathcal{U}^k$ be a $\sigma$-relatively open partition of $D$, $(\mathcal{U}^k,\mathcal{G}^k)$ be an invariant partial $\frac{1}{k}$-approximation of $F$ and $(\mathcal{U}^{k+1},\mathcal{G}^{k+1})$ be a refinement of $(\mathcal{U}^k,\mathcal{G}^k)$ for all $k=1,2,\dots$. We assume that for each approximation $(\mathcal{U}^k,\mathcal{G}^k)$ there exists a quasi-Lyapunov function $w_k : D \to \mathbb{R}$ such that $w_{k_1} \ge w_{k_2}$ on $D$ for every $k_1 \le k_2$.

    Then for each point $x_0 \in \bigcap_{k=1}^{\infty}\left(\overline{\bigcup(\mathbf{Green} \,\mathcal{U}^k)}\right) \bigcap D$ there exist $T>0$ and an absolutely continuous function $x : [0,T] \to D$ such that
    $$\left| \begin{array}{l}
    \dot{x}(t) \in F(x(t)) \text{ a.e. on } [0,T]\\
    x(0) = x_0\\
    \end{array} \right..$$
\end{thm}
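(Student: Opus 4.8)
The plan is to fix the starting point $x_0 \in \bigcap_{k=1}^\infty \overline{\bigcup(\mathbf{Green}\,\mathcal{U}^k)} \cap D$ and apply Proposition~\ref{esolution} to each approximation $(\mathcal{U}^k,\mathcal{G}^k)$ with its quasi-Lyapunov function $w_k$. This yields, for every $k$, an $\frac{1}{k}$-solution $\varphi_k$ of $\dot{x}\in F(x)$ with $\varphi_k(0)=x_0$, defined on a maximal interval $[0,T_k)$ with the escape property, and satisfying $w_k(\varphi_k(t)) \ge w_k(x_0) + t$. The first task is to obtain a common interval $[0,T]$ on which all $\varphi_k$ are defined. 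Since $D$ is merely locally closed, I would first intersect with a closed ball: $D$ locally closed means $D = S \cap K$ near $x_0$ for an open $S$ and closed $K$, so one can choose a closed ball $\overline{B}(x_0,r)$ with $\overline{B}(x_0,r)\cap D = \overline{B}(x_0,r)\cap S \cap K$, and then $\overline{B}(x_0,r)\cap S$ is open in $\mathbb{R}^n$ restricted appropriately; the escape-time estimate~(\ref{time}) (with $S$ replaced by this smaller open set, distance at least $r/2$, say) gives a uniform lower bound $T := \frac{r}{2(c+1)} > 0$ on all the $T_k$, because the right-hand side is uniformly bounded by the same constant $c$ for every $k$. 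Restricting, we get $\varphi_k : [0,T] \to D$ for all $k$.

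Next I would extract a convergent subsequence. Because $F$ is uniformly bounded by $c$ and $\varepsilon_k = 1/k \le 1$, each $\varphi_k$ is $(c+1)$-Lipschitz on $[0,T]$, so $\{\varphi_k\}$ is uniformly bounded and equicontinuous; Arzelà–Ascoli gives a subsequence (still denoted $\varphi_k$) converging uniformly to an absolutely continuous $\varphi : [0,T] \to D$ (the limit stays in $D$ since $D$ is closed near $x_0$ within the ball). It remains to show $\dot{\varphi}(t) \in F(\varphi(t))$ a.e. Here is where the closedness of the values of $F$ enters, together with a convexification/Mazur argument: the derivatives $\dot{\varphi}_k$ are bounded in $L^\infty([0,T];\mathbb{R}^n)$, hence have a weakly-$L^1$ (or weak-$*$ $L^\infty$) convergent subsequence with limit $\psi$, and $\dot{\varphi} = \psi$. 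By Mazur's lemma, convex combinations of the $\dot{\varphi}_j$, $j \ge k$, converge strongly, so for a.e.\ $t$ we have $\dot{\varphi}(t)$ in $\bigcap_k \overline{\operatorname{co}}\,\{\dot{\varphi}_j(t) : j \ge k\}$. Since $\dot{\varphi}_j(t) \in F(\varphi_j(t)) + \frac{1}{j}\overline{\mathbf{B}}$ a.e.\ and $\varphi_j(t) \to \varphi(t)$, the standard closure lemma for differential inclusions (using that $F$ has closed values; one typically also needs $F$ to be, say, upper semicontinuous or at least to have a closed graph near the trajectory to pass the convex hull through) gives $\dot{\varphi}(t) \in F(\varphi(t))$ a.e. — \emph{but} $F$ is not assumed upper semicontinuous here, which is the crux.

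The resolution, and the step I expect to be the main obstacle, is that we must not lose the information that each $\varphi_k$ lives on the ``good'' parts and that the partitions are nested refinements. The point of the hypotheses $\mathcal{U}^{k+1}\prec \mathcal{U}^k$, $(\mathcal{U}^{k+1},\mathcal{G}^{k+1})$ refining $(\mathcal{U}^k,\mathcal{G}^k)$, and $w_{k_1}\ge w_{k_2}$ for $k_1\le k_2$, is to ensure that along $\varphi_k$ the velocity lies in $G_{U}(\varphi_k(t)) \cap T^B_U(\varphi_k(t)) \subseteq F(\varphi_k(t)) + \frac1k\overline{\mathbf{B}}$ for an element $U\in\mathbf{Green}\,\mathcal{U}^k$, and that $\varphi(t)\in \bigcup(\mathbf{Green}\,\mathcal{U}^k)$ for $t>0$ uniformly — indeed $w_k(\varphi(t)) \ge \liminf_j w_k(\varphi_j(t))$; since for $j \ge k$ we have $w_j \le w_k$ and $w_j(\varphi_j(t)) \ge t$, monotonicity must be used carefully (one wants $w_k(\varphi_j(t)) \ge w_j(\varphi_j(t)) \ge t$ for $j \ge k$, using $w_k \ge w_j$), so passing to the limit in $j$ gives $w_k(\varphi(t)) \ge t > 0$, hence $\varphi(t) \in \bigcup(\mathbf{Green}\,\mathcal{U}^k)$ for every $k$ and every $t \in (0,T]$. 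Now on the open set $\bigcup(\mathbf{Green}\,\mathcal{U}^k)$ the pair $(\mathbf{Green}\,\mathcal{U}^k,\mathcal{G}^k)$ is a genuine invariant $\frac1k$-approximation with a locally finite partition, so one invokes the closure/selection machinery of \cite{KR} (the analogue of Proposition~\ref{ulimit-sol}(ii) applied across the refining sequence, together with the inclusions $G_{U^{k+1}}\subseteq G_{U^k}$) to conclude that $\dot{\varphi}(t)$ lies in $\bigcap_k (F(\varphi(t)) + \frac1k\overline{\mathbf{B}}) = F(\varphi(t))$, the last equality because $F$ has closed values. Finally, $\varphi$ restricted to any $[0,T]$ with $T>0$ as above is the desired solution, and the escape bound shows $T$ can be taken independent of $k$; one may also remark that $\varphi(0)=x_0$ follows by uniform convergence. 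Assembling these pieces — especially checking that the refinement conditions (iii)–(iv) of the refinement definition and the monotonicity $w_{k_1}\ge w_{k_2}$ genuinely let the $\varepsilon_k\to 0$ limit commute with the set-valued right-hand side on the good set — is the technical heart of the argument.
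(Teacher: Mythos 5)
Your high-level strategy mirrors the paper's: apply Proposition~\ref{esolution} for each $k$, extract a uniform limit via Arzel\`a--Ascoli, use the quasi-Lyapunov monotonicity $w_m\ge w_k$ for $m\le k$ to show $w_m(\varphi(t))\ge t>0$ (hence $\varphi(t)\in\cup(\mathbf{Green}\,\mathcal{U}^m)$ for $t\in(0,T]$), invoke Proposition~\ref{ulimit-sol}(ii) to conclude $\varphi$ is a $\frac1m$-solution for each $m$, and intersect over $m$ using closedness of the values of $F$. You even identify the correct monotonicity chain $w_k(\varphi_j(t))\ge w_j(\varphi_j(t))\ge t$ for $j\ge k$.

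However, there is a genuine gap in the setup. You take $\varphi_k$ to start at $x_0$ itself. Since $x_0$ is typically a boundary point of the bad set (i.e.\ $x_0\notin\cup(\mathbf{Green}\,\mathcal{U}^k)$), the ``moreover'' clause of Proposition~\ref{esolution} does \emph{not} apply, so you have no guarantee that the elements of $\mathbf{Green}\,\mathcal{U}^k$ are forward invariant with respect to $\varphi_k$. But this forward invariance is precisely the hypothesis of Proposition~\ref{ulimit-sol}(ii), which is what you are invoking to pass to the limit at fixed $m$. The paper avoids this by choosing, for each $k$, an approximating point $x_k\in\cup(\mathbf{Green}\,\mathcal{U}^k)$ with $\|x_k-x_0\|<1/k$ and starting $\varphi_k$ at $x_k$; then the ``moreover'' clause applies, and the refinement condition $(\mathcal{U}^k,\mathcal{G}^k)\prec(\mathcal{U}^m,\mathcal{G}^m)$ (with each $U^k\in\mathbf{Green}\,\mathcal{U}^k$ contained in some $U^m\in\mathbf{Green}\,\mathcal{U}^m$) propagates the forward invariance to the coarser partitions $\mathbf{Green}\,\mathcal{U}^m$, $m\le k$. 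The quasi-Lyapunov estimate $w_k(\varphi_k(t))\ge w_k(x_k)+t\ge t$ is still available in this setting and everything you wrote goes through once the start points are adjusted. As a secondary remark, the Mazur/weak-$*$ convergence digression is a dead end (you yourself note the standard closure lemma needs upper semicontinuity, which is not assumed); the paper never passes through weak convergence of derivatives, relying entirely on Proposition~\ref{ulimit-sol}(ii) and the fact that $\bigcap_m\left(F(\varphi(t))+\frac1m\overline{\mathbf{B}}\right)=F(\varphi(t))$ since $F$ has closed values.
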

\begin{proof}
    Fix an arbitrary point $x_0 \in \bigcap_{k=1}^{\infty}\left(\overline{\bigcup(\mathbf{Green} \,\mathcal{U}^k)}\right) \bigcap D$ and a positive integer $k$. Then $x_0 \in \left(\overline{\bigcup(\mathbf{Green} \,\mathcal{U}^k)} \cap D\right)$, so there exists $x_k \in \bigcup(\mathbf{Green} \,\mathcal{U}^k)$ such that $\|x_k-x_0\| < \frac{1}{k}$.

    Then, by applying Proposition \ref{esolution}, there exists a $\frac{1}{k}$-solution $\varphi_k(\cdot)$ of $\dot{x} \in F(x)$ starting from $x_k$ and defined on some maximal interval $[0,T_k)$ such that each element of $\mathbf{Green} \,\mathcal{U}^k$ is forward invariant with respect to $\varphi_k$ and $w_k(\varphi_k(t)) \ge w_k(x_k) + t$ for all $t \in [0,T_k)$.

    Let $D = S \cap K$, where $S$ is an open subset and $K$ is a closed subset of $\mathbb{R}^n$. According to Proposition \ref{esolution}, $T_k = +\infty$ or $\varphi_k(t)$ tends to a point in $\mathbb{R}^n \setminus S$ whenever $t \to T_k$. According to $(\ref{time})$, $$T_k \ge \frac{\mathrm{dist}(x_k,\mathbb{R}^n \setminus S)}{c+1} > 0.$$

    Repeating this for each positive integer $k$, we obtain a sequence $\{x_k\}_{k=1}^{\infty}$ which tends to $x_0$. Thus, there exists a constant $k_0 \in \mathbb{N}$ such that for all $k \ge k_0$ we have $\|x_k-x_0\| \le \frac{1}{2} \mathrm{dist}(x_0,\mathbb{R}^n \setminus S)$. Using the same arguments as in $(\ref{time2})$, for each $k \ge k_0$ we get $$T_k \ge T:= \frac{\mathrm{dist}(x_0,\mathbb{R}^n \setminus S)}{2(c+1)} > 0.$$

    Without loss of generality, we may think that the sequence $\{\varphi_k(\cdot)\}_{k=k_0}^{\infty}$ is uniformly convergent on the interval $[0,T]$ to an absolutely continuous function $\varphi : [0,T] \to D$.

    Fix an arbitrary positive integer $m \ge k_0$. Since $(\mathcal{U}^k,\mathcal{G}^k)$ is a refinement of $(\mathcal{U}^m,\mathcal{G}^m)$ for each $k \ge m$, the elements of $\mathbf{Green} \,\mathcal{U}^m$ are forward invariant with respect to the trajectory $\varphi_k(\cdot)$ for each $k \ge m$. Since $w_m(\varphi_k(t)) \ge w_k(\varphi_k(t)) \ge w_k(x_k) + t \ge t$ for all $t \in [0,T]$, by taking the limit as $k \to +\infty$, we obtain that $w_m(\varphi(t)) \ge t$ for all $t \in [0,T]$. Thus, $\varphi(t) \in \cup(\mathbf{Green} \,\mathcal{U}^m)$ for all $t \in (0,T]$. According to Proposition \ref{ulimit-sol}(ii), $\varphi(\cdot)$ is a $\frac{1}{m}$-solution of $\dot{x} \in F(x)$, i.e. $$\dot{\varphi}(t) \in F(\varphi(t)) + \frac{1}{m} \overline{\mathbf{B}} \text{ a.e. on } [0,T].$$

    Since $\dot{\varphi}(t)$ does not depend on $m$, the multi-valued mapping $F$ is closed-valued and the union of countably many sets of measure zero is a set of measure zero, we obtain that $$\dot{\varphi}(t) \in F(\varphi(t)) \text{ a.e. on } [0,T].$$

    This completes the proof.
\end{proof}

The structure of an invariant partial $\varepsilon$-approximation of a multi-valued mapping has been used in \cite{KR} to obtain the existence of solutions of inclusions of the form $\dot{x} \in F(x)+G(x)$, where $F$ is lower semi-continuous on a $G_{\delta}$ subset of its domain $D$ and upper semi-continuous and convex-valued on the complement and $G$ is upper semi-continuous and convex-valued on its domain $D$.  One could easily adapt the proof of Theorem 4.1 from \cite{KR} (with ${G}$ equal to the zero vector of $\mathbb{R}^{n}$) replacing the convexity assumption
on the values of $F$ at the ``bad points'' with the existence of a suitable Lyapunov-like function. Thus, using Theorem \ref{sol} from the present paper, we obtain the following

\begin{corol}\label{mixed}
    Let $D$ be a locally closed subset of $\mathbb{R}^n$ and $F$ be a uniformly bounded mapping with nonempty closed values defined on $\overline{D}$. We assume that there exist a sequence of positive reals $\varepsilon_k$, $k=1,2,\dots$ which tends to zero as $k$ tends to infinity,  relatively open and dense (in $D$) sets $D_k$ such that $F$ is $\varepsilon_k$-lower semi-continuous on $D_k$ (we assume that $D_1 = D$) and there exists a sequence of continuous functions $w_k :  {D} \to \mathbb{R}$ such that:
     \begin{enumerate}[(i)]
           \item $w_{k_1} \ge w_{k_2}$ on $ {D}$ for every $k_1 \le k_2$;
           \item for each positive integer $k$ we have $w_k(x) \ge 0$ for all $x \in D$ and $w_k(x) = 0$ iff $x \notin D_k$;
           \item for each positive integer $k$ the restriction $w_k\!\restriction_{D_k}$ of $w_k$ to $D_k$ is a smooth function;
          \item for each positive integer $k$ and for each $x \in D_k$ the set $F_k (x) $ is nonempty, where $F_0 (x):=F(x)$ and
          for each $k\ge 1$
          $$F_k(x):= \{y \in F_{k-1}(x):\    \langle \mathbf{grad} \, w_k(x) , y \rangle \ge 1\}.$$
     \end{enumerate}
    We assume that for each positive integer $k$ and for each $x \in D_k$ the inclusion $F(x) \subseteq T^B_{D}(x)\cap F_k(x) + \varepsilon_k \overline{\textbf{B}}$ holds.

    Then for each point $x_0 \in  D$ the differential inclusion $\dot{x} \in F(x)$ has a solution starting from $x_0$.
\end{corol}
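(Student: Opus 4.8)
The plan is to deduce the corollary from Theorem \ref{sol}. As a preliminary remark, note that Theorem \ref{sol} remains valid, with the same proof, if ``invariant partial $\tfrac1k$-approximation'' is replaced by ``invariant partial $\delta_k$-approximation'' for an arbitrary null sequence $\delta_k\to 0$: the particular rate $\tfrac1k$ is used there only in the last step $\dot\varphi(t)\in\bigcap_m\bigl(F(\varphi(t))+\tfrac1m\overline{\mathbf B}\bigr)=F(\varphi(t))$, and since $F$ has closed values one also has $\bigcap_m\bigl(F(\varphi(t))+\delta_m\overline{\mathbf B}\bigr)=F(\varphi(t))$. Consequently it is enough to produce, for every $k$, a $\sigma$-relatively open partition $\mathcal U^k$ of $D$ and a family $\mathcal G^k$ such that $(\mathcal U^k,\mathcal G^k)$ is an invariant partial $\delta_k$-approximation of $F$ for some $\delta_k\to0$, $\cup(\mathbf{Green}\,\mathcal U^k)=D_k$, $(\mathcal U^{k+1},\mathcal G^{k+1})$ is a refinement of $(\mathcal U^k,\mathcal G^k)$, and $w_k$ is a quasi-Lyapunov function for $(\mathcal U^k,\mathcal G^k)$. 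Granting this, items (ii) and (iii) of the corollary together with $\cup(\mathbf{Green}\,\mathcal U^k)=D_k$ give Definition \ref{quasilyapunov}(i)--(iii) for $w_k$, item (i) of the corollary is exactly the monotonicity $w_{k_1}\ge w_{k_2}$ demanded by Theorem \ref{sol}, and density of $D_k$ in $D$ gives $\bigcap_k\overline{\cup(\mathbf{Green}\,\mathcal U^k)}\cap D=\bigcap_k\overline{D_k}\cap D=D$, so every $x_0\in D$ satisfies the hypothesis of Theorem \ref{sol} and the desired solution exists.

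The construction of $(\mathcal U^k,\mathcal G^k)$ is obtained by adapting the proof of Theorem 4.1 of \cite{KR} with the additive term there taken to be the zero map. Two elementary consequences of the hypotheses supply the book-keeping for the refinement tower: items (i) and (ii) of the corollary yield $D_{k+1}\subseteq D_k$ (if $x\notin D_k$ then $0\le w_{k+1}(x)\le w_k(x)=0$, so $x\notin D_{k+1}$), and the definition of $F_k$ yields $F_{k+1}(x)\subseteq F_k(x)\subseteq\cdots\subseteq F(x)$ for $x\in D_{k+1}$. On the relatively open, dense set $D_k$, on which $F$ is $\varepsilon_k$-lower semi-continuous, one builds as in \cite{KR} a locally finite relatively open partition $\mathbf{Green}\,\mathcal U^k$ of $D_k$ together with upper semi-continuous convex-compact-valued maps $G_U$, $U\in\mathbf{Green}\,\mathcal U^k$, whose domain pieces are ``thin'' along the approximating fields, so that each $U$ is weakly forward invariant for $G_U$ and $G_U(x)\cap T^B_{\overline U}(x)\subseteq F(x)+\delta_k\overline{\mathbf B}$ on $\overline U$ with some $\delta_k\to0$; the partition $\mathcal U^k$ is then completed over the relatively closed ``bad'' set $D\setminus D_k$ by whatever is needed to make it a $\sigma$-relatively open partition of $D$, and the refinement $(\mathcal U^{k+1},\mathcal G^{k+1})\prec(\mathcal U^k,\mathcal G^k)$ is produced by the inductive scheme of \cite{KR}, which is available precisely because $D_{k+1}\subseteq D_k$ and $F_{k+1}\subseteq F_k$. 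The one new ingredient, replacing the ``convexity at bad points'' step of \cite{KR}, is the choice of the reference velocities: rather than sampling arbitrary elements of $F$, one invokes the standing assumption $F(x)\subseteq T^B_D(x)\cap F_k(x)+\varepsilon_k\overline{\mathbf B}$ together with $F_k(x)\ne\emptyset$ to sample them inside $T^B_D(x)\cap F_k(x)$. Since every $v\in F_k(x)$ satisfies $\langle\mathbf{grad}\,w_k(x),v\rangle\ge1$ by definition of $F_k$, this builds the ascent property of $w_k$ directly into $\mathcal G^k$, and after a sufficiently fine choice of the pieces (using continuity of $\mathbf{grad}\,w_k$ on $D_k$ and the modulus of $\varepsilon_k$-lower semi-continuity) one gets $\max_{y\in G_U(x)\cap T^B_U(x)}\langle\mathbf{grad}\,w_k(x),y\rangle$ bounded below by a fixed positive constant on $U$, which — as the remark after Definition \ref{quasilyapunov} makes explicit — is exactly condition (iv). Thus $w_k$ is a quasi-Lyapunov function for $(\mathcal U^k,\mathcal G^k)$, all requirements of the first paragraph hold, and Theorem \ref{sol} applies.

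The main obstacle is the local construction, where three demands compete: $G_U$ must be upper semi-continuous, convex-compact-valued and $\delta_k$-close to $F$; the piece $U$ must be weakly forward invariant for $G_U$, which forces $U$ to be cylindrical along the approximating field and the field to be tangent to $D$ throughout $U$, not just at one sample point; and $G_U(x)\cap T^B_U(x)$ must still contain a direction of definite ascent for $w_k$. The first two points are handled by the field-adapted choice of relatively open partition already present in \cite{KR}; the decisive new observation is that the reference velocities may be taken inside $T^B_D(x)\cap F_k(x)$ — which is exactly what the hypothesis $F(x)\subseteq T^B_D(x)\cap F_k(x)+\varepsilon_k\overline{\mathbf B}$ buys — and that the $\varepsilon_k$ error introduced when passing from sampled to pointwise values is charged entirely to the distance-to-$F$ estimate, so that it perturbs the ascent inequality only by an amount that a further refinement absorbs. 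Once this is in place, the corollary reduces, as above, to the already proven Theorem \ref{sol}.
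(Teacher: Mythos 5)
Your proposal takes essentially the same route as the paper: the paper's ``proof'' is merely a pointer to adapting Theorem~4.1 of \cite{KR} (with the additive term zero, convexity at bad points replaced by the quasi-Lyapunov function) and then invoking Theorem~\ref{sol}. You flesh out exactly that pointer, correctly supplying the bookkeeping the paper leaves implicit — the nesting $D_{k+1}\subseteq D_k$ and $F_{k+1}\subseteq F_k$ needed for the refinement tower, the harmless generalization of $\tfrac1k$ to any null sequence $\delta_k$ in Theorem~\ref{sol}, and density of each $D_k$ giving $\bigcap_k\overline{\cup(\mathbf{Green}\,\mathcal U^k)}\cap D=D$ — and identifying, as the paper does, that the hypothesis $F(x)\subseteq T^B_D(x)\cap F_k(x)+\varepsilon_k\overline{\mathbf B}$ is what replaces the convexity step.
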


\section{An example}\label{example}

The example in this section originates from the Fuller problem from optimal control theory (cf., e.g., Chapter 2 from \cite{ZB}). Given an arbitrary $T>0$, consider the following control system in the plane $$\dot{x}(t) = y(t),\quad \dot{y}(t) = u(t).$$ The admissible controls are all measurable functions $u:[0,T] \to [-1,1]$.  The optimal control problem is to minimize  the cost functional $$\int\limits_{0}^{T} |x(t)|^q \mathrm{d}t \quad (q > 1)$$ over all trajectories of the control system, satisfying the boundary conditions $$x(0)=0,\quad y(0)=0,\quad x(T)=x_0,\quad y(T)=y_0.$$

This problem has been analyzed in Lemma 2.2 and Lemma 2.3 from \cite{ZB}. For every final state $(x_0,y_0)$ there exists $T_0 > 0$ such that for all $T>T_0$ the problem has a unique optimal solution with a control function with a countable number of switching times. The switching points of the corresponding optimal trajectory determine a convergent sequence which tends to the origin. The curve $x=C(q)y^2\sgn{y}$ is the switching set. The optimal control equals $1$ on the left-hand side of the switching curve and equals $-1$ on the right-hand side. For every $q>1$ the parameter $C(q)$ satisfies the inequality $0 < C(q) < \frac{1}{2}$.

Taking into account the solution of this problem, we consider a differential inclusion with the following upper semi-continuous right-hand side:

\begin{equation*}
    F(x,y) = \begin{cases}
        \{(y,1)\}, &\text{if } x < \frac{1}{4}y^2 \sgn y\\
        \{(y,-1),(y,1)\}, &\text{if } x = \frac{1}{4}y^2, y>0\\
        \{(0,1),(0,-1)\}, &\text{if } x=0,y=0\\
        \{(y,-1),(y,1)\}, &\text{if } x = -\frac{1}{4}y^2, y<0\\
        \{(y,-1)\}, &\text{if } x > \frac{1}{4}y^2 \sgn y\\
    \end{cases}.
\end{equation*}

To simplify the calculations, we have chosen the value $\frac{1}{4}$ for the parameter $C(q)$. The same kind of reasoning can be made for every value of $C(q)$ in the interval $\left(0,\frac{1}{2}\right)$.

The existence of a solution of the differential inclusion
\begin{equation}\label{diffincl}
    (\dot{x},\dot{y}) \in F(x,y), x(0)=x_0, y(0)=y_0,
\end{equation} for each initial condition $(x_0,y_0)$ can be proved by using Lemma 2.3 from \cite{ZB}. In the current section, we propose a different way to obtain the existence of a solution of this inclusion by using the result from Section \ref{main}. For this purpose, we will construct an invariant partial $\varepsilon$-approximation of $F$, which does not depend on the choice of $\varepsilon > 0$, and a quasi-Lyapunov function for this approximation. Then we will apply Theorem \ref{sol}. Note that we cannot apply Corollary \ref{mixed} because its assumptions concerning the Lyapunov-like function cannot be satisfied.

We define the following sets
\begin{align*}
   D_1 &= \left\{(x,y) \in \mathbb{R}^2 : x \le \frac{1}{4}y^2 \sgn y\right\},\quad D_2 = \left\{(x,y) \in \mathbb{R}^2 : x \ge \frac{1}{4}y^2 \sgn y\right\},\quad \\
C_1 &= \left\{(x,y) \in \mathbb{R}^2 : x = \frac{1}{4}y^2, y \ge 0\right\},\quad C_2 = \left\{(x,y) \in \mathbb{R}^2 : x = -\frac{1}{4}y^2 , y \le 0\right\}.
\end{align*}
It is easy to check that the sets $D_1 \setminus C_1$ and $D_2 \setminus C_2$ are weakly forward invariant with respect to $F$. Thus, the existence of a solution of the differential inclusion $(\ref{diffincl})$ is guaranteed for each initial condition except for the origin. Therefore, we will consider the differential inclusion $(\ref{diffincl})$ in a neighborhood $V=r\mathbf{B}$ of the origin. Due to the weak forward invariance of the sets $D_1 \setminus C_1$ and $D_2 \setminus C_2$, we expect that the set of ``good points'' is the intersection of $\left(D_1 \setminus C_1\right) \cup \left(D_2 \setminus C_2\right)$ with $V$, i.e. the set $V \setminus \{(0,0)\}$. In order to build an invariant partial $\varepsilon$-approximation, we will add one new variable representing the time and consider the problem in $\mathbb{R}^3$. The details follow.

Let $T > 0$ be an arbitrary positive integer. We set $\widetilde{D} = V \times [0,T)$, which is an intersection of an open subset and a closed subset of $\mathbb{R}^3$. We set $z=(x,y,t) \in \mathbb{R}^3, \widetilde{F}(z) = F(x,y) \times \{1\}$ and consider the differential inclusion $$\dot{z} \in \widetilde{F}(z), z \in \widetilde{D}.$$

We will use the so-called ``ice-cream cone'' (cf. \cite{BC}, \cite{D}) $$K_{\delta} = \left\{(x,y,t) \in \mathbb{R}^3 : \|(x,y)\| \le ct, t \in [0,\delta)\right\},$$ where $\delta > 0$ and $c = \sup\left\{\|v\| : (x,y) \in V, v \in F(x,y)\right\}$. The set $K_{\delta}$ has been used by Bressan and Cortesi in their proof for the existence of a solution of a differential inclusion with a lower semi-continuous right-hand side (cf. \cite{BC}).

Since $F$ is uniformly bounded by $c$, the ``ice-cream cones'' $z+K_\delta$, $z \in \widetilde{D}$ are strongly forward invariant with respect to $\widetilde{F}$.

Let $\widetilde{D_1} = (D_1 \cap V) \times [0,T)$, $\widetilde{D_2} = (D_2 \cap V) \times [0,T)$, $\widetilde{C_1} = (C_1 \cap V) \times [0,T)$, $\widetilde{C_2} = (C_2 \cap V) \times [0,T)$ and $B = \{(0,0)\} \times [0,T)$.
We modify the construction from \cite{KR} by splitting the ``ice-cream cones'' which pass through $\widetilde{C_1} \cup \widetilde{C_2}$, otherwise $\widetilde{F}$ can't be approximated on the whole ``ice-cream cone'' with an upper semi-continuous mapping with convex compact values (cf. Definition \ref{invpapr}(iv.a)) because $\widetilde{F}$ is not convex-valued on $\widetilde{C_1} \cup \widetilde{C_2}$.

For each positive integer $s > \frac{2}{r}$ let us consider the set
$$M_s = \left\{ (x,y) \in V : \frac{1}{s} \le \|(x,y)\| \le r-\frac{1}{s}\right\} \times [0,T).$$
For each such $s$ we will construct a relatively open partition $\mathcal{V}^s$ of $\widetilde{D}$. The family $\{M_s : s \in \mathbb{N}, s > \frac{2}{r} \}$ of sets is increasing with respect to inclusion and its union is $\widetilde{D} \setminus B$.
For each $z \in \overline{M_s}$ there exists a relative neighborhood $W(z)$ of $z$ (with respect to $\overline{M_s}$) which has the form $\overline{M_s} \cap (\widetilde{z}+K_{\delta})$, where $\widetilde{z} \in (V \setminus \{(0,0)\}) \times [\tau,T)$ for $\tau < 0$ and $z$ is in the relative interior of $W(z)$ with respect to $\overline{M_s}$.
We choose $\delta > 0$ to be so small that $\overline{\widetilde{z} + K_{\delta}}$ is contained in $\overline{\widetilde{D}}$ and intersects at most one of the surfaces $\overline{\widetilde{C_1}}$ and $\overline{\widetilde{C_2}}$.

The compactness of $\overline{M_s}$ implies the existence of finitely many $z_i$, $i \in \{1,\dots,m(s)\}$ with $\overline{M_s} = \bigcup_{i=1}^{m(s)} W\left(z_i\right)$, where $W\left(z_i\right) = \overline{M_s} \cap \left(\widetilde{z}_i + K_{\delta_i}\right)$ with $\widetilde{z}_i=\left(\widetilde{x}_i,\widetilde{y}_i,\widetilde{t}_i\right)$ for each $i \in \{1,\dots,m(s)\}$.

Now, we split the ``ice-cream cones'' which pass through $\widetilde{C_1} \cup \widetilde{C_2}$. Let $P_{1,i}= \left(\widetilde{z}_i + K_{\delta_i}\right) \cap \left(\widetilde{D_1} \setminus \widetilde{C_1}\right)$ and $P_{2,i}= \left(\widetilde{z}_i + K_{\delta_i}\right) \cap \left(\widetilde{D_2} \setminus \widetilde{C_2}\right)$ for each $i \in \{1,\dots,m(s)\}$. We consider only these sets $P_{k,i}$, where $k \in \{1,2\}$ and $i \in \{1,\dots,m(s)\}$, which are nonempty and denote them by $Z_j$ for $j \in \{1,\dots,j(s)\}$. We renumber the sets $Z_j$, $j \in \{1,\dots,j(s)\}$ in the following way: if $Z_k \cap \left(\widetilde{C_1} \cup \widetilde{C_2}\right) = \emptyset$ and $Z_l \cap \left(\widetilde{C_1} \cup \widetilde{C_2}\right) \ne \emptyset$, then $k < l$ (i.e. each set which has a nonempty intersection with $\widetilde{C_1} \cup \widetilde{C_2}$ is after each set with an empty intersection with $\widetilde{C_1} \cup \widetilde{C_2}$).

We note that each of the sets $Z_j$, $j \in \{1,\dots,j(s)\}$ can intersect at most one of the surfaces $\widetilde{C_1}$ and $\widetilde{C_2}$ because of the chosen cones' heights.

We also note that each of the sets $Z_j$, $j \in \{1,\dots,j(s)\}$ is weakly forward invariant with respect to $\widetilde{F}$ as an intersection of a strongly forward invariant set (the ``ice-cream cone'') and a weakly forward invariant set ($\widetilde{D_1} \setminus \widetilde{C_1}$ or $\widetilde{D_2} \setminus \widetilde{C_2}$ - direct calculation). Each of these sets is a ``split ice-cream cone'' or an ``ice-cream cone''.

We consider the positive reals $\{t_i = \widetilde{t}_i + \delta_i : i = 1,2,\dots,m(s)\}$. Without loss of generality, we may assume that this finite sequence is non-decreasing.

We set the first element of $\mathcal{V}^s$ to be $$V^s_1 = \left\{z=(x,y,t) \in \widetilde{D} \setminus B : t<t_1\right\} \setminus \left(\bigcup_{j=1}^{j(s)} Z_{j}\right).$$
The set $V^s_1$ is relatively open in $\widetilde{D}$ because the union of the sets $Z_{j}, j \in \{1,2,\dots,j(s)\}$ intersected with the set $S=\left\{z=(x,y,t) \in \widetilde{D} \setminus B : t<t_1\right\}$ is relatively closed in $S$.
Next, we successively define $$V^s_{j+1} = \left\{z=(x,y,t) \in \widetilde{D} \setminus B : t<t_1\right\} \bigcap \left(Z_{j} \setminus \left(\bigcup_{l=j+1}^{j(s)} Z_{l}\right)\right)$$ for each $j \in \{1,2,\dots,j(s)\}$. These sets are the remaining elements of $\mathcal{V}^s$ which are contained in the first strip $\{z=(x,y,t) : t \in [0,t_1)\}$.

For each $j \in \{1,2,\dots,j(s)\}$ the set $V^s_{j+1}$ is relatively open in $\left(\widetilde{D} \setminus \left(\bigcup_{l \le j} V^s_l\right)\right) \cap S$  because $\left(\bigcup_{l=j+1}^{j(s)} Z_{l}\right) \cap S$ is relatively closed in $S$. Indeed, let $l$ be an arbitrary positive integer
belonging to $\{j+1, \dots, j(s)\}$. If $Z_l$ is not a ``split ice-cream cone'', then it is relatively closed in $S$. If $Z_l$ is a ``split ice-cream cone'', then two cases are possible: if $Z_l \cap \left(\widetilde{C_1} \cup \widetilde{C_2}\right) \ne \emptyset$, then
it is relatively closed in $S$; if $Z_l \cap \left(\widetilde{C_1} \cup \widetilde{C_2}\right) = \emptyset$, then there is a positive integer $k$ with $l<k\le j(s) $ such that $Z_l \cup Z_k$ is
the intersection of an ``ice-cream cone'' with $S$, hence it is relatively closed in $S$.

\begin{figure}[htb]
    \centering
    \includegraphics[scale=0.5]{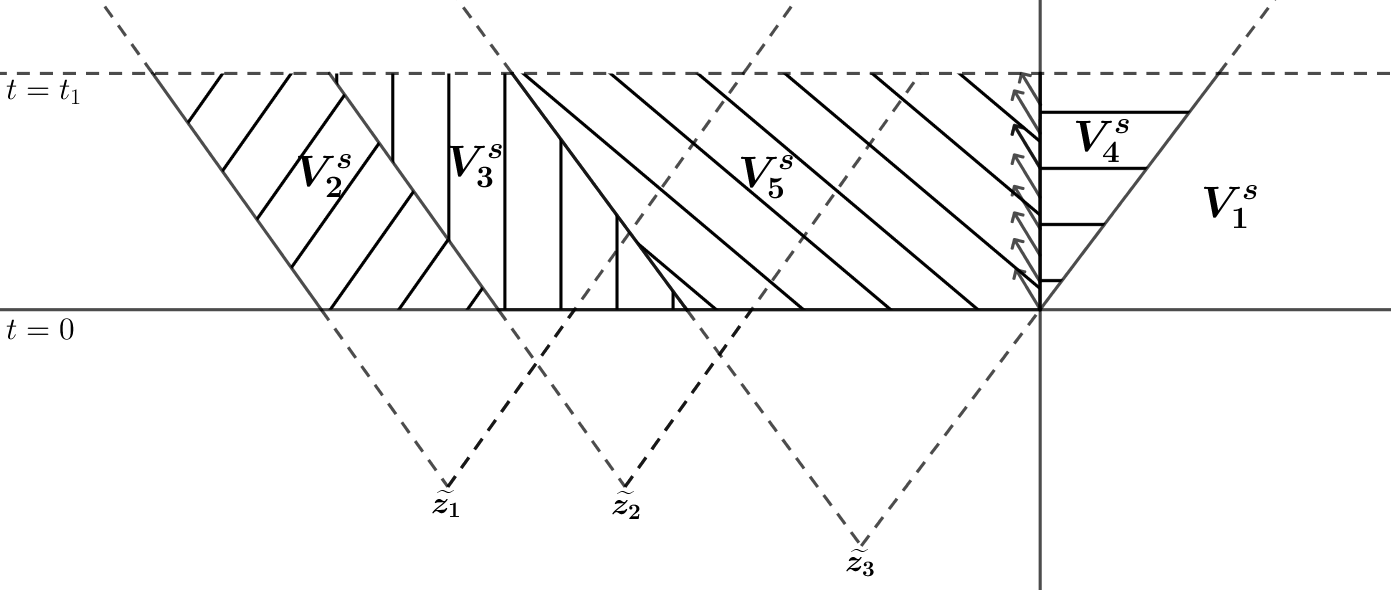}
    \caption{This figure is an attempt to give some intuition on the construction of the elements of the relatively open partition $\mathcal{V}^s$ of $\widetilde{D}$ which are contained in the first strip. Let us assume that the first strip is contained in the union of $\tilde z_i + K_{\delta_i}$, $i=1,2,3$. The ``ice-cream cones'' $\widetilde{z_1}+K_{\delta_1}$ and $\widetilde{z_2}+K_{\delta_2}$ are entirely contained in $\widetilde{D_1}\setminus\widetilde{C_1}$, so we can set $Z_1 = \widetilde{z_1}+K_{\delta_1}$ and $Z_2 = \widetilde{z_2}+K_{\delta_2}$. The ``ice-cream cone'' $\widetilde{z_3}+K_{\delta_3}$ intersects $\widetilde{C_2}$, so we split it into $\left(\widetilde{z_3}+K_{\delta_3}\right) \cap \left(\widetilde{D_1} \setminus \widetilde{C_1}\right)$ and $\left(\widetilde{z_3}+K_{\delta_3}\right) \cap \left(\widetilde{D_2} \setminus \widetilde{C_2}\right)$. Since a trajectory of $\widetilde{F}$ can pass from $\left(\widetilde{z_3}+K_{\delta_3}\right) \cap \left(\widetilde{D_2} \setminus \widetilde{C_2}\right)$ to $\left(\widetilde{z_3}+K_{\delta_3}\right) \cap \left(\widetilde{D_1} \setminus \widetilde{C_1}\right)$ through $\widetilde{C_2}$ within this strip, we set $Z_3 = \left(\widetilde{z_3}+K_{\delta_3}\right) \cap \left(\widetilde{D_2} \setminus \widetilde{C_2}\right)$ and $Z_4 = \left(\widetilde{z_3}+K_{\delta_3}\right) \cap \left(\widetilde{D_1} \setminus \widetilde{C_1}\right)$. In this way, we obtain all the elements $V_j^s, j \in \{1,\dots,5\}$ of $\mathcal{V}^s$ which are contained in the first strip. A trajectory of $\widetilde{F}$ can leave each of these sets by moving to a set with a greater number or by moving to the next strip.
    Moreover, a trajectory of $\widetilde{F}$ can leave the set $V_5^s$ only by moving to the next strip because of the choice of the cones' heights.}
\end{figure}

We proceed in the same way in the next strips $\{z=(x,y,t) : t \in [t_i,t_{i+1})\}$, $i \in \{1,2,\dots,m(s)-1\}$ by setting the first element of the strip to be $$\left\{z=(x,y,t) \in \widetilde{D} \setminus B : t \in [t_i,t_{i+1})\right\} \setminus \left(\bigcup_{j=1}^{j(s)} Z_{j}\right)$$ and the next elements are constructed recursively as in the first strip but replacing $t<t_1$ with $t \in [t_i,t_{i+1})$. We set the last element of the relatively open partition $\mathcal{V}^s$ of $\widetilde{D}$ to be the set $B$.

Our goal is to construct an invariant partial $\varepsilon$-approximation $(\mathcal{U},\mathcal{G})$ of $\widetilde{F}$ such that $\cup (\mathbf{Green} \,\mathcal{U}) = \widetilde{D} \setminus B$ and which does not depend on the choice of $\varepsilon >0$.

We set $\mathcal{U}^1$ to be equal to $\mathcal{V}^1$ and $\mathbf{Green} \,\mathcal{U} \cap \mathcal{U}^1$ to be the family of nonempty elements of $\mathcal{V}^1$ which are contained in some ``ice-cream cone'' or ``split ice-cream cone''.
Assume that we have constructed $\mathcal{U}^s$ and $\mathbf{Green} \,\mathcal{U} \cap \mathcal{U}^s$ for some positive integer $s$. Let $U \in \mathcal{U}^s$ be an arbitrary element. If $U \in \mathbf{Green} \,\mathcal{U} \cap \mathcal{U}^s$, we leave it as it is. Otherwise, if $U$ is not an element of $\mathbf{Green} \,\mathcal{U} \cap \mathcal{U}^s$, then it is contained in the first element of one of the strips from $\mathcal{V}^s$ or it is the last element of the finite partition $\mathcal{U}^s$. In the first case we have that $U \cap \mathcal{V}^{s+1}$ is a relatively open partition of $U$. If $U$ is the last element of $\mathcal{U}^s$, then $U=B$ and we leave it as it is. Proceeding in the same way with all elements of $\mathcal{U}^s$ and using the lexicographic order, we obtain a relatively open partition $\mathcal{U}^{s+1}$ of $\widetilde{D}$ (cf. Proposition 1.5 from \cite{NR}). Let $\mathbf{Green} \,\mathcal{U} \cap \mathcal{U}^{s+1}$ be the family of nonempty elements of $\mathcal{U}^{s+1}$ which are contained in some ``ice-cream cone'' or ``split ice-cream cone''. In this way, we obtain the $\sigma$-relatively open partition $\mathcal{U} = \bigcup_{s=1}^{\infty} \mathcal{U}^s$ of $\widetilde{D}$ and its subfamily $\mathbf{Green} \,\mathcal{U}$.

All the ``ice-cream cones'' and ``split ice-cream cones'' in the construction above are contained in $\widetilde{D} \setminus B$ because each of them intersects at most one of the surfaces $\widetilde{C_1}$ and $\widetilde{C_2}$, so we have $\cup (\mathbf{Green} \,\mathcal{U}) \subseteq \widetilde{D} \setminus B$. For the reverse inclusion, suppose that $z \in \widetilde{D} \setminus B$ is arbitrary. Let $s$ be the first integer $s$ such that $z \in M_s$. Then $z$ belongs to an element of the relatively open partition $\mathcal{V}^s$ which is contained in some ``ice-cream cone'' or ``split ice-cream cone''. Thus, $z \in \cup (\mathbf{Green} \,\mathcal{U} \cap \mathcal{U}^{s}) \subseteq \cup (\mathbf{Green} \,\mathcal{U})$. Therefore, $\cup (\mathbf{Green} \,\mathcal{U}) = \widetilde{D} \setminus B$.

Let $z=(x,y,t) \in \mathbf{Green} \,\mathcal{U}$ be an arbitrary point. We want to prove that there exists a neighborhood of $z$ that intersects at most finitely many elements of $\mathbf{Green} \,\mathcal{U}$. Due to the definition of the sets $M_s$ and $\cup(\mathbf{Green} \,\mathcal{U}) = \widetilde{D} \setminus B = \cup M_s$, we can find $s$ such that $z$ belongs to the relative interior of $M_s$ with respect to $\widetilde{D}$ (for example, we can choose every positive integer $s$ such that $s > 1/\|(x,y)\|$ and $s > 1/(r-\|(x,y)\|)$). Then there exists a relative neighborhood of $z$ (with respect to $\widetilde{D}$) which is contained in $M_s$, so it is also contained in the finite union of the sets from the family $\mathbf{Green} \,\mathcal{U} \cap \mathcal{U}^s$. Thus, this neighborhood of $z$ intersects finitely many elements of $\mathbf{Green} \,\mathcal{U}$, so Definition \ref{invpapr}(ii) is satisfied.

Let $W$ be an arbitrary element of $\mathbf{Green} \,\mathcal{U}$. Then it is contained in some ``ice-cream cone'' or ``split ice-cream cone''. If $W \subset \widetilde{D_1} \setminus \widetilde{C_1}$, we define the value of $G_W$ at each point $(x,y,t) \in \overline{W}$ as $\{(y,1,1)\}$. If $W \subset \widetilde{D_2} \setminus \widetilde{C_2}$, we define the value of $G_W$ at each point $(x,y,t) \in \overline{W}$ as $\{(y,-1,1)\}$. Clearly, the multi-valued mapping $G_W : \overline{W} \rightrightarrows \mathbb{R}^3$ is upper semi-continuous with convex compact values and $G_W(x,y,t) \subseteq \widetilde{F}(x,y,t)$ for all $(x,y,t) \in \overline{W}$. It remains only to check the tangency condition in Definition \ref{invpapr}(iv.b). Let $Z$ be the ``ice-cream cone'' or the ``split ice-cream cone'' in which $W$ is contained and let $S=\left\{(x,y,t) : \overline{t} \le t < \overline{\overline{t}}\right\}$ be the strip in which $W$ is contained. We have that $Z$ is weakly forward invariant with respect to $G_W$ as an intersection of a strongly forward invariant set with respect to $G_W$ (the whole ``ice-cream cone'') and a weakly forward invariant set with respect to $G_W$ (the corresponding set $\widetilde{D_i} \setminus \widetilde{C_i}$). The strip $S$ is strongly forward invariant with respect to $G_W$. Thus, $Z \cap S$ is weakly forward invariant with respect to $G_W$, so $G_W(z) \cap T^B_{Z \cap S}(z) \ne \emptyset$ for all $z \in Z \cap S$ (cf. Theorem 2.10 from \cite{CLSW}). Since $W$ is relatively open in $Z \cap S$, we have $G_W(z) \cap T^B_{W}(z) \ne \emptyset$ for all $z \in W$.

We put $\mathcal{G}=\{G_W : W \in \mathbf{Green} \,\mathcal{U}\}$. We have constructed an invariant partial $\varepsilon$-approximation $(\mathcal{U},\mathcal{G})$ of $\widetilde{F}$ which does not depend on the choice of $\varepsilon>0$.

Now, we have to find a quasi-Lyapunov function for the so-constructed invariant partial $\varepsilon$-approximation $(\mathcal{U},\mathcal{G})$ in order to apply Theorem \ref{sol}. First, we will construct such a function on the set $\widetilde{D_1} \setminus \widetilde{C_1}$. Using axial symmetry with respect to the $t$-axis, we will extend the function to the whole set $\widetilde{D}$. We will obtain the function by constructing its level curves which uniquely determine it.

\begin{figure}[htb]
    \centering
    \includegraphics[scale=0.22]{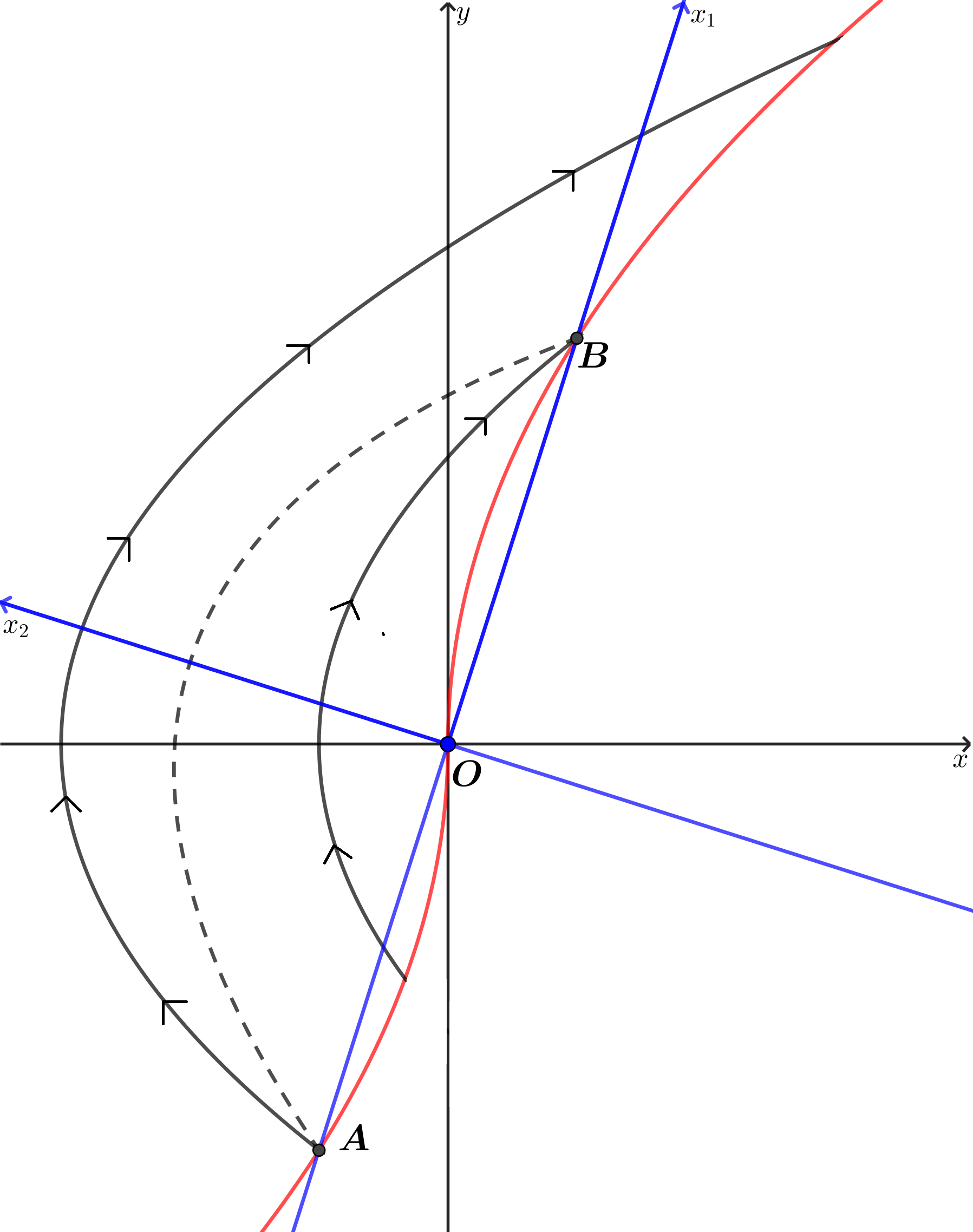}
    \caption{The construction of the parabola $\pi$ (the dotted curve). The black curves are trajectories of the inclusion.}
\end{figure}

Let $V$ be an arbitrary open ball centered at the origin and $\overline{a} > 0$ be an arbitrary positive real number. Later we will shrink the radius of $V$ and $\overline{a}$ if necessary. Let $a \in [-\overline{a},0)$. We will construct a parabola $\pi$ in the $Oxy$ plane which passes through $A\left(-\frac{1}{4}a^2,a\right)$ and $B\left(\frac{1}{4}a^2,-a\right)$. The parabolic surface $L=\left(\pi \times [0,T]\right) \cap \left(\widetilde{D_1} \setminus \widetilde{C_1}\right)$ will be contained in the level set $\left\{(x,y,t) : \overline{w}(x,y,t)=-a\right\}$ at level $-a$ of the quasi-Lyapunov function $\overline{w}$ that we will construct.

In order to obtain the equation of the parabola more easily, we transform the coordinate system by rotation about the origin and scaling. The transformed coordinate system is determined by the line passing through the points $A$ and $B$, which has an equation $y=-\frac{4x}{a}$, and by its perpendicular line passing through the origin, which has an equation $y=\frac{1}{4}ax$. More precisely, we rotate through an angle $\theta$ about the origin, where $\theta$ is the unique angle in $(0,\frac{\pi}{2})$ such that $\tan \theta = -\frac{4}{a}$. Then we scale by a factor $\frac{-a}{\sqrt{a^2+16}}$. We obtain a relation between the coordinates $(x,y)$ in the original coordinate system $Oxy$ and the coordinates $(x_1,x_2)$ in the transformed coordinate system $Ox_1x_2$ which is given by
$$\left| \begin{array}{l}
x_1 = x - \frac{4y}{a}\\  \\
x_2 = y + \frac{4x}{a}
\end{array} \right.  \mbox{  and  }
\left| \begin{array}{l}
x = \frac{a^2}{a^2+16}x_1  + \frac{4a}{a^2+16}x_2\\ \\
y = -  \frac{4a}{a^2+16}x_1 +  \frac{a^2}{a^2+16}x_2
\end{array} \right..$$

We will work in the transformed coordinate system. The points $A$ and $B$ have coordinates $\left(-\frac{1}{4}a^2-4,0\right)$ and $\left(\frac{1}{4}a^2+4,0\right)$, respectively. We are looking for a parabola with an equation $x_2 = -px_1^2 + q$, where $p>0$ and $q>0$. It has to pass through the points $A$ and $B$. Thus, $\frac{q}{p} = \left(\frac{a^2+16}{4}\right)^2$. Let's parameterize the parabola as
$$\left| \begin{array}{l}
x_1(\tau) = \tau\\
x_2(\tau) = -p \tau^2 + \frac{(a^2+16)^2}{16}p
\end{array} \right.$$
for $\tau \in I = \left[-\frac{1}{4}a^2-4,\frac{1}{4}a^2+4\right]$.
For each $\tau \in I$ the vector $(2p\tau,1)$ is normal to the parabola at the point $(x_1(\tau),x_2(\tau))$. Then for each $\tau \in I$ the vector $g(\tau)=(2p\tau,1,0)$ is normal to the parabolic surface $L$ at the point $(x_1(\tau),x_2(\tau),0)$. For each point $(x_1,x_2,x_3) \in \widetilde{D_1} \setminus \widetilde{C_1}$ the velocity vector from the invariant partial $\varepsilon$-approximation $(\mathcal{U},\mathcal{G})$, written in the transformed coordinate system, is $v(x_1,x_2,x_3)=\left(\frac{a^2}{a^2+16}\left(x_2-\frac{4}{a}x_1\right)-\frac{4}{a},1+\frac{4a}{a^2+16}\left(x_2-\frac{4}{a}x_1\right),1\right)$. Since we want to construct a quasi-Lyapunov function which is constant on the parabolic surface $L$, the gradient of this function at $(x_1(\tau),x_2(\tau),0)$ will also be normal to $L$ for each $\tau \in I$. Thus, we will choose $p$ in such a way that for each $\tau \in I$ the inner product of the normal vector $g(\tau)$ to the parabolic surface $L$ at the point $(x_1(\tau),x_2(\tau),0)$ and the velocity vector $v(x_1(\tau),x_2(\tau),0)$ at the same point is positive.

We denote by $h(\tau)=\langle g(\tau), v(x_1(\tau),x_2(\tau),0) \rangle$, $\tau \in I$, this inner product. Thus,
\begin{align*}
    h(\tau) = &\frac{2a^2p\tau}{a^2+16}\left(-p\tau^2 + \frac{(a^2+16)^2}{16}p-\frac{4}{a}\tau\right)-\frac{8p\tau}{a}+
    \\&+1+\frac{4a}{a^2+16}\left(-p\tau^2+\frac{(a^2+16)^2}{16}p-\frac{4}{a}\tau\right).
\end{align*}

The function $h$ can be written as
\begin{align*}
    h(\tau) = \frac{2a}{a^2+16}\left(-p^2 a \tau^3 - 6 p \tau^2 + \frac{p^2 a (a^2+16)^2}{16}\tau +\frac{(a^2+16)^2}{8} p\right) 
\end{align*}
$$
-\left(\frac{8 p}{a} + \frac{16}{a^2+16}\right)\tau + 1.
$$

We choose $p = -\frac{2a}{a^2+16}$. Then the second summand in the above expression equals zero and $$h(\tau) = 1 - \frac{a^2}{2} + \frac{a^4 \tau}{32 + 2 a^2} + \frac{24 a^2 \tau^2}{(16 + a^2)^2} - \frac{8 a^4 \tau^3}{(16 + a^2)^3}.$$
We have $h(\tau) > \frac{1}{2}$ for all $\tau \in I$ (we shrink $\overline{a}$ if necessary in order to make $a$ sufficiently close to zero).

In this way, we obtain a parabola with an equation $$x_2 = \frac{2a}{a^2+16} x_1^2-\frac{a(a^2+16)}{8}.$$

We denote by $f(x,y,a)$ the difference between the left- and right-hand sides of the above equation written in the original coordinate system $Oxy$, i.e. $$f(x,y,a) = y+\frac{4x}{a} - \frac{2a}{a^2+16} \left(x-\frac{4y}{a}\right)^2+\frac{a(a^2+16)}{8}.$$
Thus, $f(x,y,a)=0$ on the parabolas of interest.

Let $$M = \left\{(x,y) \in V : x \le \frac{1}{2}y^2\right\} \mbox{ and } P=M \times \left[-\overline{a},0\right).$$ 
We consider the function $f$ on the set $P$, which contains $\left(\left(D_1 \setminus C_1\right) \cap V\right) \times \left(-\overline{a},0\right)$ in its interior.

The following reasoning is going to be used repeatedly:   
\begin{equation}\label{star}
    \begin{array}{l}
        \mbox{Since }f(0,0,-\overline{a}) = -\frac{\overline{a}(\overline{a}^2+16)}{8} < 0 \mbox{ and }f \mbox{ is continuous at }(0,0,-\overline{a}), \mbox{ we}\\
        \mbox{shrink the radius of }  V \mbox{ so that }
        f(x,y,-\overline{a}) < 0 \mbox{  for all  } (x,y,-\overline{a}) \in P.
    \end{array}
\end{equation}    
We will use it in order to uniquely express $a$ as a function of $x$ and $y$ from the equation $f(x,y,a)=0$.

At each interior point of $P$ we have
\begin{gather*}
    f'_x (x,y,a) = \frac{16y-4ax}{a^2+16} + \frac{4}{a},\\
    f'_y (x,y,a) = \frac{16x}{a^2+16}-\frac{64y}{a(a^2+16)}+1,\\
    f'_a (x,y,a) = -\frac{32(2x^2+axy-2y^2)}{(a^2+16)^2}+\frac{2y^2-4x}{a^2}+\frac{2(x^2-y^2)}{a^2+16}+\frac{3a^2}{8}+2.
\end{gather*}

For $x,y$ and $a$ sufficiently close to zero (we shrink $\overline{a}$ and next, according to (\ref{star}), the radius of $V$  if necessary) we have:
\begin{equation}\label{estimate1}
    f'_x(x,y,a) \in \left(\frac{4}{a}-1,\frac{4}{a}+1\right)
\end{equation} because the first summand in $f'_x$ tends to zero whenever $(x,y,a) \to (0,0,0)$;
\begin{equation}\label{estimate2}
    f'_y(x,y,a) \in \left(-\frac{64y}{a(a^2+16)}+\frac{1}{2},-\frac{64y}{a(a^2+16)}+2\right)
\end{equation} because the first summand in $f'_y$ tends to zero whenever $(x,y,a) \to (0,0,0)$;
\begin{equation}\label{estimate3}
    f'_a(x,y,a) \in \left(\frac{2y^2-4x}{a^2}+1,\frac{2y^2-4x}{a^2}+3\right)
\end{equation} because the first, third and fourth summands in $f'_a$ tend to zero whenever $(x,y,a) \to (0,0,0)$.

For each point $(x,y)$ belonging to the interior $N$ of $M$ we want to solve the equation $f(x,y,a)=0$ with respect to $a$. Let $(x_0,y_0) \in N$ be an arbitrary point. Then $f(x_0,y_0,a)$ is a function of the variable $a$ defined on the interval $[-\overline{a},0)$. Since $2y_0^2-4x_0 > 0$, we have $f'_a(x_0,y_0,a) > 1$ for all $a \in \left(-\overline{a},0\right)$. Thus, $f(x_0,y_0,a)$ is a strictly increasing function with respect to $a$ on the interval $\left[-\overline{a},0\right)$. According to (\ref{star}), we have $f(x_0,y_0,-\overline{a}) < 0$. Moreover,  $\lim_{a \to 0^-} f(x_0,y_0,a) =$ $$  =\lim_{a \to 0^-} -\frac{4}{a} . \frac{8(y_0^2-2x_0)-a^2 x_0}{a^2+16} + y_0 + \frac{16x_0y_0-2ax_0^2}{a^2+16} + \frac{a(a^2+16)}{8} = +\infty.$$

Therefore, there exists a unique $a_0 \in (-\overline{a},0)$ such that $f(x_0,y_0,a_0)=0$. We define $\varphi(x_0,y_0):=a_0$. 
The so defined function $\varphi:N \longrightarrow \left(-\overline{a},0\right)$ is the unique implicit function satisfying
$f(x,y,\varphi(x,y))=0$ for all $(x,y) \in N$. 
The function $\varphi$ is smooth on its domain $N$ because of the smoothness of $f$ on the interior of $P$ and the implicit function theorem.

We can continuously extend the function $\varphi$ to the point $(0,0)$ with value $0$. Indeed, let $\{(x_n,y_n)\}_{n=1}^{\infty} \subset N$ be an arbitrary sequence which tends to $(0,0)$. For each point $(x_n,y_n)$ there exists a unique point $A_n=(-\frac{1}{4}a_n^2,a_n) \in C_2$, where $a_n = \varphi(x_n,y_n)$. We will prove that the corresponding sequence of points $\{A_n\}_{n=1}^{\infty}$ on $C_2$ tends to $(0,0)$. Therefore, $a_n = \varphi(x_n,y_n)$, which is the ordinate of the point $A_n$, will tend to zero whenever $n \to \infty$.

For each positive integer $n$ for a moment we will work in the transformed coordinate system $Ox_1x_2$ for $a=a_n$.
Let $Q_n\left(0,-\frac{a_n(a_n^2+16)}{8}\right)$ be the vertex of the constructed parabola for $a=a_n$. If $a_n$ is sufficiently close to zero (we shrink $\overline{a}$ and the radius of $V$ as explained in $(\ref{star})$ if necessary), then the closest point to the origin on the parabola is its vertex $Q_n$.

We return to the original coordinate system $Oxy$. We have $$\|(x_n,y_n)\| \ge \|OQ_n\| = \frac{-a_n}{\sqrt{a_n^2+16}}.\frac{-a_n(a_n^2+16)}{8}=\frac{a_n^2\sqrt{a_n^2+16}}{8}.$$ Thus, $$\frac{\|OA_n\|^2}{\|(x_n,y_n)\|} \le \frac{\|OA_n\|^2}{\|OQ_n\|} = \frac{\sqrt{a_n^2+16}}{2}.$$ Then $\|OA_n\| \le \frac{\sqrt[4]{a_n^2+16}}{\sqrt{2}} \sqrt{\|(x_n,y_n)\|}$, so $A_n$ tends to $(0,0)$ whenever $n \to \infty$. Therefore, we extend $\varphi$ to the point $(0,0)$ by $\varphi(0,0):=0$.

We obtain a function $\psi : \left(\widetilde{D_1} \setminus \widetilde{C_1}\right) \cup B \to \mathbb{R}$ defined by $\psi(x,y,t)=-\varphi(x,y)$. This function is smooth on $\left(\widetilde{D_1} \setminus \widetilde{C_1}\right)$ and continuous on $\left(\widetilde{D_1} \setminus \widetilde{C_1}\right) \cup B$ because $\varphi$ is smooth on $N$ and continuous on $N \cup \{(0,0)\}$.

Now, we will check that the so-constructed function satisfies Definition \ref{quasilyapunov}(iv) for the elements of $\mathbf{Green} \,\mathcal{U}$ which are contained in $\widetilde{D_1} \setminus \widetilde{C_1}$. The implicit function theorem implies that for all $(x,y,t) \in \left(\widetilde{D_1} \setminus \widetilde{C_1}\right)$ we have $$\psi'_x(x,y,t) = \frac{f'_x(x,y,\varphi(x,y))}{f'_a(x,y,\varphi(x,y))} \text{ and } \psi'_y(x,y,t) = \frac{f'_y(x,y,\varphi(x,y))}{f'_a(x,y,\varphi(x,y))}.$$

In order to prove the inequality from Definition \ref{quasilyapunov}(iv), we need to find an upper bound for the denominators. From $(\ref{estimate3})$ we know that $f'_a(x,y,a) < \frac{2y^2-4x}{a^2}+3$ on $\mathrm{int}\,P$. For a fixed $a \in [-\overline{a},0)$ from the construction of the parabola in the corresponding transformed coordinate system $Ox_1x_2$, it follows that the abscissas of all the points of the parabola lying in $(  {D_1} \setminus  {C_1} )\cap V$ satisfy $-\frac{1}{4}a^2-4 \le x_1 \le \frac{1}{4}a^2+4$ (remember that the coordinates of $A$ and $B$ in the transformed coordinate system are $(-\frac{1}{4}a^2-4, 0)$ and $( \frac{1}{4}a^2+4, 0)$, respectively) and the ordinates of all the points of the parabola lying in $(  {D_1} \setminus  {C_1} )\cap V$ satisfy $0 \le x_2 \le -\frac{a(a^2+16)}{8}$. Written in the original coordinate system, we have
\begin{alignat*}{2}
    -\frac{1}{4}a^2-4 &\le x-\frac{4}{a}y \le \frac{1}{4}a^2+4\\
    0 &\le \frac{4}{a}x+y \le -\frac{a(a^2+16)}{8}.
\end{alignat*}
Maximizing the upper bound for $f'_a$ is reduced to maximizing $q(x,y)=y^2-2x$ on the compact set defined by these inequalities. It follows that $q(x,y) \le \frac{a^2(a^4-16a^2+160)}{64} \le 3a^2$   (we shrink $\overline{a}$ and the radius of $V$ as explained in $(\ref{star})$ if necessary). Therefore, $f'_a(x,y,\varphi(x,y)) \le 9$ for all $(x,y) \in (  {D_1} \setminus  {C_1} )\cap V$.

Then for $(x,y,t) \in \widetilde{D_1} \setminus \widetilde{C_1}$, $y \ge 0$, using (\ref{estimate1}) and (\ref{estimate2}), we have
\begin{equation}\label{grad1}
    \begin{aligned}
        &\langle \mathbf{grad} \,\psi(x,y,t), (y,1,1) \rangle = \frac{f'_x(x,y,\varphi(x,y))}{f'_a(x,y,\varphi(x,y))} y + \frac{f'_ y(x,y,\varphi(x,y))}{f'_a(x,y,\varphi(x,y))} \\&\ge \frac{\left(\frac{4}{\varphi(x,y)}-1\right)y - \frac{64y}{\varphi(x,y)(\varphi(x,y)^2+16)} + \frac{1}{2}}{9}= \frac{\frac{4\varphi(x,y)y - (\varphi(x,y)^2+16)y}{(\varphi(x,y)^2+16)}+\frac{1}{2}}{9} > \frac{1}{36}
    \end{aligned}
\end{equation}
because the first summand in the numerator tends to $0$ whenever $(x,y) \to (0,0)$  (we shrink the radius of $V$ if necessary).

Analogously for $(x,y,t) \in \widetilde{D_1} \setminus \widetilde{C_1}$, $y<0$, using (\ref{estimate1}) and (\ref{estimate2}), we have
\begin{equation}\label{grad2}
    \begin{aligned}
        &\langle \mathbf{grad} \,\psi(x,y,t), (y,1,1) \rangle = \frac{f'_x(x,y,\varphi(x,y))}{f'_a(x,y,\varphi(x,y))} y + \frac{f'_ y(x,y,\varphi(x,y))}{f'_a(x,y,\varphi(x,y))} \\&\ge \frac{\left(\frac{4}{\varphi(x,y)}+1\right)y - \frac{64y}{\varphi(x,y)(\varphi(x,y)^2+16)} + \frac{1}{2}}{9}= \frac{\frac{4\varphi(x,y)y + (\varphi(x,y)^2+16)y}{(\varphi(x,y)^2+16)}+\frac{1}{2}}{9} > \frac{1}{36}
    \end{aligned}
\end{equation}
because the first summand in the numerator tends to $0$ whenever $(x,y) \to (0,0)$   (we shrink the radius of $V$ if necessary).

Using axial symmetry with respect to the $t$-axis, we extend the function $\psi$ to the whole set $\widetilde{D}$, and we obtain a function $w : \widetilde{D} \to \mathbb{R}$ defined by
\begin{equation*}
    w(x,y,t) = \begin{cases}
        \psi(x,y,t), &\text{if } (x,y,t) \in \widetilde{D_1} \setminus \widetilde{C_1}\\
        \psi(-x,-y,t), &\text{if } (x,y,t) \in \widetilde{D_2} \setminus \widetilde{C_2}\\
    \end{cases}.
\end{equation*}

Then we have
\begin{equation}\label{grad3}
    \begin{aligned}
        \langle \mathbf{grad} \,w(x,y,t), (y,-1,1) \rangle
    &= \langle \mathbf{grad} \,\psi(-x,-y,t), (y,-1,1) \rangle = \\
    &= \psi'_x(-x,-y,t)(-y)+\psi'_y(-x,-y,t) > \frac{1}{36}
    \end{aligned}
\end{equation}
for each point $(x,y,t) \in \widetilde{D_2} \setminus \widetilde{C_2}$.

We define the function $\overline{w}: \widetilde{D} \to \mathbb{R}$, $\overline{w}(x,y,t):=36w(x,y,t)$. It is continuous and its restriction to an arbitrary element of $\mathbf{Green} \,\mathcal{U}$ is smooth. Moreover, according to (\ref{grad1}), (\ref{grad2}) and (\ref{grad3}), we have that $\overline{w}$ satisfies Definition \ref{quasilyapunov}(iv). Hence, $\overline{w}$ is a quasi-Lyapunov function for the invariant partial $\varepsilon$-approximation $(\mathcal{U},\mathcal{G})$. Applying Theorem \ref{sol}, we prove the existence of a solution of the differential inclusion $(\ref{diffincl})$, starting from an arbitrary point of the set $V$, including the origin.

\section{Concluding remarks}

In this paper, we propose a sufficient condition for existence of a solution of a differential inclusion with a uniformly bounded right-hand side that has nonempty closed (possibly nonconvex) values (see Theorem \ref{sol} for the main result). In fact, the invariant approximations considered in the present paper (see Definition \ref{invapr} and Definition \ref{invpapr}) can be viewed as a natural extension to the patchy vector fields proposed by Ancona and Bressan in \cite{AB}. The difference is that the elements of the relatively open partitions in the current paper are weakly forward invariant, in contrast to the strong invariance hypothesis imposed in \cite{AB}. Another difference is that we do not impose any smoothness assumptions regarding the boundary of these elements and regarding the vector fields. Also, the construction in \cite{AB} allows for the trajectories to have at most finitely many points of nonsmoothness on any compact interval (see Proposition 3.1 from \cite{AB}), while the approach considered here does not impose such restrictions (for example, the trajectory starting from the origin that we obtain in the example in Section \ref{example} has countably many points of nonsmoothness).

Two motivating examples for the approach presented in the current paper are the one at the beginning of Section 3 in \cite{KR} and the one in Section \ref{example} here. The vector fields in these examples ``run away'' from the origin, but the lack of uniqueness of the solution starting from the origin poses some problems. With the notion of a quasi-Lyapunov function, we propose a way to formalize this behavior (the distance from the origin $(x,y) \mapsto \sqrt{x^2+y^2}$ can play the role of a quasi-Lyapunov function in the example from \cite{KR}).
A natural question is in what situations we can observe such behavior of the vector field. Examples of this type naturally arise from optimal control problems with chattering controls, i.e. the corresponding trajectories have infinitely many switching points on a finite time interval (cf., e.g., \cite{ZB}). 

Next, we propose two open problems. We believe that the ideas developed in this paper might help in their study. 
 
1. Prove the existence of a solution of a differential inclusion whose right-hand side is a monotone usco mapping $F$  (i.e. an upper semi-continuous monotone multi-valued mapping $F$ which has compact values).  Note that the existence of a solution is already known if we assume that the usco monotone mapping $F$ is cyclically monotone (cf.  Bressan, Cellina and Colombo \cite{BCC}). Also, the existence of an $\varepsilon$-solution in the general problem (without assuming  cyclical monotonicity)  is already obtained in \cite{KRT} for each $\varepsilon>0$.   However, the existence of a solution to the general problem is still an open question. 

2. Find a necessary and sufficient condition for existence of a solution of a differential inclusion whose right-hand side is definable in some o-minimal structure (for example, subanalytic sets). The o-minimal structures seem to be a proper setting for this problem as they possess some nice stratification properties (see \cite{Coste} for more information). This task becomes nontrivial in the presence of chattering.


\begin{thebibliography}{99}

\bibitem{AB} Ancona, F., Bressan, A.: Patchy vector fields and asymptotic stabilization. ESAIM, Control Optim. Calc. Var. 4, 445-471 (1999). \url{https://doi.org/10.1051/cocv:1999117}

\bibitem{AC} Aubin, J. P., Cellina, A.: Differential Inclusions. Springer-Verlag, Berlin/New York (1984)

\bibitem{Bivas} Bivas, M.: A nonautonomous Olech type result. European Journal of Mathematics 3, 342–362 (2017). \url{https://doi.org/10.1007/s40879-017-0142-8}

\bibitem{B} Bressan, A.: On differential relations with lower continuous right-hand side: An existence theorem. J. Differential Equations 37(1), 89-97 (1980). \url{https://doi.org/10.1016/0022-0396(80)90090-X}

\bibitem{BCC} Bressan, A., Cellina, A., Colombo, G.: Upper semicontinuous differential inclusions without convexity. Proc. Am. Math. Soc. 106(3), 771-775 (1989). \url{https://doi.org/10.2307/2047434}

\bibitem{BC} Bressan, A., Cortesi, A.: Directionally continuous selections in Banach spaces. Nonlinear Anal. 13, 987-992 (1989). \url{https://doi.org/10.1016/0362-546X(89)90025-4}

\bibitem{CLSW} Clarke, F., Ledyaev, Y., Stern, R., Wolenski, P.: Nonsmooth Analysis and Control Theory. Springer-Verlag, New York (1998)

\bibitem{Coste} Coste, M.: An introduction to o-minimal geometry, \url{http://perso.univ-rennes1.fr/michel.coste/polyens/OMIN.pdf} (1999)

\bibitem{D} Deimling, K.: Multivalued differential equations. Walter de Gruyter, Berlin (1992)

\bibitem{F} Filippov, A.: The existence of solutions of generalized differential equations. Mat. Zametki 10(3), 307-313 (1971) (in Russian); English translation in Math. Notes 10(3), 608-611 (1971). \url{https://doi.org/10.1007/BF01464722}

\bibitem{KR} Krastanov, M., Ribarska, N.: Viability and an Olech type result. Serdica Math. J. 39(3-4), 423-446 (2013)

\bibitem{KRT} Krastanov, M., Ribarska, N., Tsachev, Ts.: On the existence
of solutions to differential inclusions with nonconvex right-hand sides. SIAM
J. Optim. 18(3), 733–751 (2007). \url{https://doi.org/10.1137/060659077}

\bibitem{L0} Łojasiewicz jr., St.: The existence of solutions for lower semicontinuous orientor fields. Bull. Acad. Pol. Sci. 28, 483-487 (1980)

\bibitem{L} Łojasiewicz jr., St.: Some theorems of Scorza-Dragoni type for multifunctions with application to the problem of existence of solutions for differential multi-valued equations. Mathematical control theory, Banach Cent. Publ. 14, 625-643 (1985)

\bibitem{O} Olech, C.: Existence of solutions of non-convex orientor fields. Boll. Unione Mat. Ital. 11(3), 189-197 (1975)

\bibitem{NR} Ribarska, N.: Internal characterization of fragmentable spaces. Mathematika 34(2), 243-257 (1987). \url{https://doi.org/10.1112/S0025579300013498}

\bibitem{ZB} Zelikin, M., Borisov, V.: Theory of Chattering Control, Systems and Control: Foundation and Applications. Birkhauser (1994)

\end{thebibliography}
\end{document}